\newtheorem{theorem}{Theorem}[section]
\newtheorem{corollary}{Corollary}[section]
\newtheorem{open problem}{\sc \bf Open problem}[section]
\newtheorem{remark}{Remark}[section]
\DeclareMathOperator{\sech}{sech}
\newtheorem{open}{Open problem}[section]
\numberwithin{equation}{section} \setlength{\oddsidemargin}{10pt}
\begin{document}

\title[]{Series representations of the remainders in the expansions for certain trigonometric and hyperbolic functions with applications}

\author[{ C.-P. Chen }]{ Chao-Ping Chen$^{*}$}

\address{C.-P. Chen: School of Mathematics and Informatics, Henan Polytechnic University, Jiaozuo City 454000, Henan Province, China}
 \email{chenchaoping@sohu.com}

\author[{ R.B. Paris }]{Richard B. Paris}

\address{R.B. Paris: Division of Computing and Mathematics,\\
 University of Abertay, Dundee, DD1 1HG, UK}
 \email{R.Paris@abertay.ac.uk}

\thanks{*Corresponding Author}

\thanks{2010 Mathematics Subject Classification.  Primary 11B68; Secondary 26D05}

\thanks{Key words and phrases. Bernoulli   numbers;  Euler numbers; trigonometric  function;  hyperbolic function; inequality}

\begin{abstract}
In this paper, we   present  series
representations of the remainders in the expansions for certain trigonometric and hyperbolic functions.
 By using the obtained
results, we establish some inequalities for trigonometric and hyperbolic functions.
\end{abstract}


\maketitle

\section{Introduction }

 The Bernoulli numbers $B_n$ and Euler numbers
$E_n$ are defined, respectively, by the following generating functions:
\begin{equation*}\label{generalizedBernoullipolynomials}
\frac{t}{e^{t}-1}=\sum_{n=0}^{\infty}B_{n}\frac{t^n}{n!}\quad
(|t|<2\pi)\quad \text{and}\quad
\sec t=\sum_{n=0}^{\infty}E_{n}\frac{t^n}{n!}\quad
(|t|<\pi).
\end{equation*}

Recently, Chen and Paris \cite{Chen-Paris-submission} presented  series
representations of the remainders in the expansions for $2/(e^t+1)$, $\sech t$ and $\coth t$.
 For example, these authors proved that for  $t > 0$ and $N\in\mathbb{N}:=\{1, 2, \ldots\}$,
\begin{align*}
\sech t=\sum_{j=0}^{N-1}\frac{E_{2j}}{(2j)!}t^{2j}+R_N(t)
\end{align*}
with
\begin{align*}
R_N(t)=\frac{(-1)^{N}2t^{2N}}{\pi^{2N-1}}\sum_{k=0}^{\infty}\frac{(-1)^{k}}{(k+\frac{1}{2})^{2N-1}\Big(t^2+\pi^2(k+\frac{1}{2})^2\Big)},
\end{align*}
and
\begin{align*}
\sech t=\sum_{j=0}^{N-1}\frac{E_{2j}}{(2j)!}t^{2j}+\Theta(t,
N)\frac{E_{2N}}{(2N)!}t^{2N}
\end{align*}
with a suitable $0 < \Theta(t, N) < 1$.  By using the obtained
results, these authors deduced some inequalities and completely monotonic
functions associated with the ratio of gamma functions.

This paper is a continuation of our earlier work \cite{Chen-Paris-submission}. We here present  series
representations of the remainders in the expansions  for certain trigonometric and hyperbolic functions.
 By using the obtained
results, we establish some inequalities for trigonometric and hyperbolic functions.
\vskip 8mm

\section{Series representations of the remainders}
\begin{theorem}\label{Thm1-remainder-tan}
Let $N\geq0$ be an integer. Then for $|t|<\pi/2$, we have
\begin{align}\label{Thm1-tan-series-representation}
\tan t =\sum_{j=1}^N\frac{2^{2j}(2^{2j}-1){|B_{2j}|}}{(2j)!}t^{2j-1}+\vartheta_N(t),
\end{align}
where
\begin{align}\label{Thm1-remainder-tan-series-representation}
\vartheta_N(t)=\frac{2^{2N+3}t^{2N+1}}{\pi^{2N}}\sum_{k=1}^{\infty}\frac{1}{(2k-1)^{2N}\Big(\pi^2(2k-1)^2-4t^2\Big)}.
\end{align}
Here, and throughout this paper, an empty sum is understood to be zero.
\end{theorem}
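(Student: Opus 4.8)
The plan is to start from the Mittag--Leffler (partial-fraction) expansion of the tangent,
\[
\tan t=\sum_{k=1}^{\infty}\frac{8t}{\pi^2(2k-1)^2-4t^2},
\]
valid for $|t|<\pi/2$ since the poles of $\tan$ sit at $t=\pm\pi(2k-1)/2$ with residue $-1$, and to convert it into the stated finite-sum-plus-remainder form by expanding each summand as a geometric series. Writing $a_k=\pi^2(2k-1)^2$ and noting that $|t|<\pi/2$ forces $4t^2/a_k\le 4t^2/\pi^2<1$ for every $k\ge1$, I would expand
\[
\frac{8t}{a_k-4t^2}=\frac{8t}{a_k}\sum_{m=0}^{\infty}\Big(\frac{4t^2}{a_k}\Big)^m=\sum_{m=0}^{\infty}\frac{8\cdot4^m\,t^{2m+1}}{a_k^{\,m+1}}.
\]

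Next I would interchange the two summations and split the $m$-sum at $m=N-1$. The head $\sum_{m=0}^{N-1}$ collapses, after summing over $k$, to $\sum_{j=1}^{N}$ of the claimed Bernoulli coefficients: with $j=m+1$ one has $\sum_{k\ge1}a_k^{-j}=\pi^{-2j}(1-2^{-2j})\zeta(2j)$, using $\sum_{k\ge1}(2k-1)^{-2j}=(1-2^{-2j})\zeta(2j)$; substituting the classical value $\zeta(2j)=(-1)^{j+1}(2\pi)^{2j}B_{2j}/(2\,(2j)!)=(2\pi)^{2j}|B_{2j}|/(2\,(2j)!)$ reduces the coefficient of $t^{2j-1}$ to exactly $2^{2j}(2^{2j}-1)|B_{2j}|/(2j)!$, since the factors $8\cdot4^{j-1}=2^{2j+1}$, the $\pi$-powers, and the remaining powers of $2$ cancel cleanly. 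The tail $\sum_{m\ge N}$ resums as another geometric series,
\[
\sum_{m=N}^{\infty}\frac{8\cdot4^m\,t^{2m+1}}{a_k^{\,m+1}}=\frac{8\cdot4^{N}t^{2N+1}}{a_k^{\,N}(a_k-4t^2)},
\]
and summing this over $k$ with $a_k^N=\pi^{2N}(2k-1)^{2N}$ and $8\cdot4^N=2^{2N+3}$ produces precisely the $\vartheta_N(t)$ of \eqref{Thm1-remainder-tan-series-representation}. The case $N=0$ is just the Mittag--Leffler expansion itself, which serves as a convenient base.

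The one step requiring genuine care is the interchange of the $k$- and $m$-summations; everything else is bookkeeping of constants. I would justify the interchange by Tonelli's theorem applied to the doubly-indexed array of absolute values $8\cdot4^m|t|^{2m+1}a_k^{-(m+1)}$, all nonnegative: summing first over $m$ gives $8|t|/(a_k-4t^2)$, and the subsequent sum over $k$ is finite because it is the partial-fraction series evaluated at $|t|<\pi/2$. Hence the double series converges absolutely and the rearrangement is legitimate. The only remaining points are the strict inequality $4t^2<\pi^2(2k-1)^2$ (so no denominator vanishes and every geometric ratio is $<1$) and the verification of the constants, both routine once the scheme above is in place.
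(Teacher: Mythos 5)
Your proposal is correct and follows essentially the same route as the paper: the same partial-fraction (Mittag--Leffler) expansion of $\tan t$, the same split into a Bernoulli-coefficient head plus a geometric tail, and the same identity $\sum_{k\ge1}(2k-1)^{-2j}=(2^{2j}-1)\pi^{2j}|B_{2j}|/(2\,(2j)!)$ to evaluate the coefficients. The only difference is organizational: the paper applies the exact finite identity $\frac{1}{1-q}=\sum_{j=0}^{N-1}q^{j}+\frac{q^N}{1-q}$ termwise, which produces the remainder $\vartheta_N(t)$ in closed form at once and thereby avoids your double-series interchange (Tonelli) step and the resummation of the tail.
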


\begin{proof}
It follows from \cite[p. 44]{Gradshteyn2000} that
\begin{equation*}\label{Euler-constantSm-inequality1}
\tan\frac{\pi x}{2}=\frac{4x}{\pi}\sum_{k=1}^{\infty}\frac{1}{(2k-1)^{2}-x^2}.
\end{equation*}
Replacement of $x$ by $2t/\pi$ yields
\begin{equation}\label{tan-expansion}
\tan t=\sum_{k=1}^{\infty}\frac{8t}{\pi^2(2k-1)^{2}-4t^2},
\end{equation}
which can be written as
\begin{equation}\label{tan-expansion-re}
\tan t=\frac{8t}{\pi^2}\sum_{k=1}^{\infty}\frac{1}{(2k-1)^2\left(1-\left(\frac{2t}{\pi(2k-1)}\right)^2\right)}.
\end{equation}

Using the following identities:
\begin{equation}\label{identity1/(1-q)}
\frac{1}{1-q}=\sum_{j=0}^{N-1}q^{j}+\frac{q^N}{1-q}\qquad
(q\not=1)
\end{equation}
and
\begin{equation}\label{odd-series-Expansion}
\sum_{k=1}^{\infty}\frac{1}{(2k-1)^{2n}}=\frac{(2^{2n}-1)\pi^{2n}|B_{2n}|}{2\cdot(2n)!}
\end{equation}
(see \cite[p. 8]{Gradshteyn2000}), we obtain from \eqref{tan-expansion-re} that
\begin{align*}
\tan t&=\frac{8t}{\pi^2}\sum_{k=1}^{\infty}\frac{1}{(2k-1)^2}\left(\sum_{j=0}^{N-1}\left(\frac{2t}{\pi(2k-1)}\right)^{2j}+\frac{\left(\frac{2t}{\pi(2k-1)}\right)^{2N}}{1-\left(\frac{2t}{\pi(2k-1)}\right)^2}\right)\\
&=\sum_{j=1}^N\frac{2^{2j}(2^{2j}-1){|B_{2j}|}}{(2j)!}t^{2j-1}+\vartheta_N(t),
\end{align*}
where
\begin{align*}
\vartheta_N(t)=\frac{2^{2N+3}t^{2N+1}}{\pi^{2N}}\sum_{k=1}^{\infty}\frac{1}{(2k-1)^{2N}\Big(\pi^2(2k-1)^2-4t^2\Big)}.
\end{align*}
 The proof of Theorem \ref{Thm1-remainder-tan} is complete.
\end{proof}

Becker and Stark \cite{Becker133--138} showed that for $0<x<\pi/2$,
\begin{equation}\label{Becker-Stark-ineq}
\frac{8}{\pi^2-4x^2}<\frac{\tan x}{x}<\frac{\pi^2}{\pi^2-4x^2}.
\end{equation}
The constant $8$ and $\pi^2$ are the best possible.
The Becker--Stark inequality \eqref{Becker-Stark-ineq} has
attracted much interest of many mathematicians and has motivated a
large number of research papers (cf.
\cite{Banjac-Makragic-Malesevic,Chen-Cheung2011,Debnath-Mortici-Zhu207--215,Ge2012,NishizawaJIA2015,Sun-Zhu563--567,ZhuID931275,ZhuID838740,Zhu401--406}
and the references cited therein). For example,  Banjac et al.  \cite[Theorem 2.7]{Banjac-Makragic-Malesevic} proved in 2015 that for $0<x<\pi/2$,
\begin{align}\label{Thm1-tan-Banjac}
&\frac{\pi^2+\left(\frac{\pi^2}{3}-4\right)x^2+\left(\frac{\pi^2}{18}-\frac{2}{3}\right)x^4}{\pi^2-4x^2}<\frac{\tan x}{x}<\frac{\pi^2-\frac{\pi^2}{16}x^2+\frac{1}{2}x^4-\frac{1}{\pi^2}x^6}{\pi^2-4x^2}.
\end{align}
There is no strict comparison between  the two lower bounds in \eqref{Becker-Stark-ineq} and \eqref{Thm1-tan-Banjac}. The upper bound in \eqref{Thm1-tan-Banjac} is sharper than that in \eqref{Becker-Stark-ineq}.

Write \eqref{Thm1-tan-series-representation} as
\begin{align}\label{Thm1-tan-series-representationre}
\frac{\tan t}{t} &=\sum_{j=1}^N\frac{2^{2j}(2^{2j}-1){|B_{2j}|}}{(2j)!}t^{2j-2}\nonumber\\
&\quad+\frac{2^{2N+3}t^{2N}}{\pi^{2N}}\left\{\frac{1}{\pi^2-4t^2}+\sum_{k=2}^{\infty}\frac{1}{(2k-1)^{2N}\Big(\pi^2(2k-1)^2-4t^2\Big)}\right\}.
\end{align}
Noting that the function
\begin{align*}
F(t):=\sum_{k=2}^{\infty}\frac{1}{(2k-1)^{2N}\Big(\pi^2(2k-1)^2-4t^2\Big)}
\end{align*}
is strictly increasing for $0<t<\pi/2$, we then obtain from \eqref{Thm1-tan-series-representationre} that for $0<t<\pi/2$,
\begin{align}\label{Thm1-tan-series-representationreobtain}
\frac{2^{2N+3}t^{2N}}{\pi^{2N+2}}\sum_{k=2}^{\infty}\frac{1}{(2k-1)^{2N+2}}&<\frac{\tan t}{t}-\sum_{j=1}^N\frac{2^{2j}(2^{2j}-1){|B_{2j}|}}{(2j)!}t^{2j-2}-\frac{2^{2N+3}t^{2N}}{\pi^{2N}(\pi^2-4t^2)}\nonumber\\
&<\frac{2^{2N+1}t^{2N}}{\pi^{2N+2}}\sum_{k=2}^{\infty}\frac{1}{k(k-1)(2k-1)^{2N}}.
\end{align}
Direct computations yield
\begin{align*}
\sum_{k=2}^{\infty}\frac{1}{(2k-1)^{4}}=\frac{\pi^4}{96}-1,  \quad \sum_{k=2}^{\infty}\frac{1}{k(k-1)(2k-1)^{2}}=5-\frac{\pi^2}{2}.
\end{align*}
The choice $N=1$ in \eqref{Thm1-tan-series-representationreobtain} yields
\begin{align*}
\frac{32t^{2}}{\pi^{4}}\left(\frac{\pi^4}{96}-1\right)&<\frac{\tan t}{t}-1-\frac{32t^{2}}{\pi^{2}(\pi^2-4t^2)}<\frac{8t^{2}}{\pi^{4}}\left(5-\frac{\pi^2}{2}\right),\qquad
0<t<\frac{\pi}{2},
\end{align*}
which can be rearranged for $0<x<\pi/2$ as
\begin{align}\label{Thm1-tan-series-representationreyieldsN=1re}
&\frac{\pi^2+\frac{\pi^2-12}{3}x^2+\frac{384-4\pi^4}{3\pi^4}x^4}{\pi^2-4x^2}<\frac{\tan x}{x}<\frac{\pi^2+\frac{72-8\pi^2}{\pi^2}x^2+\frac{16\pi^2-160}{\pi^4}x^4}{\pi^2-4x^2}.
\end{align}
The inequality \eqref{Thm1-tan-series-representationreyieldsN=1re} improves the inequalities \eqref{Becker-Stark-ineq} and \eqref{Thm1-tan-Banjac}.

\begin{theorem}\label{Thm2-remainder-tanh}
Let $N\geq0$ be an integer. Then for all $t\in\mathbb{R}$, we have
\begin{align}\label{Thm2-remainder-tanh-expansion}
\tanh t=\sum_{j=1}^{N}\frac{2^{2j}(2^{2j}-1){B_{2j}}}{(2j)!}t^{2j-1}+\tau_N(t),
\end{align}
where
\begin{align}\label{Thm2-remainder-tanh-tauN}
\tau_N(t)=(-1)^{N}\frac{2^{2N+3}t^{2N+1}}{\pi^{2N}}\sum_{k=1}^{\infty}\frac{1}{(2k-1)^{2N}\Big(\pi^2(2k-1)^2+4t^2\Big)},
\end{align}
and
\begin{align}\label{Thm2-remainder-tanh-xi}
\tanh t=\sum_{j=1}^{N}\frac{2^{2j}(2^{2j}-1){B_{2j}}}{(2j)!}t^{2j-1}+\xi(t,N)\,\frac{2^{2N+2}(2^{2N+2}-1) B_{2N+2}}{(2N+2)!}\,t^{2N+1},
\end{align}
where $0<\xi(t, N)<1$.
\end{theorem}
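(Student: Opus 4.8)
The plan is to mirror the proof of Theorem~\ref{Thm1-remainder-tan} essentially verbatim, replacing the partial-fraction expansion of the tangent by that of the hyperbolic tangent. First I would record
\begin{equation*}
\tanh t=\sum_{k=1}^{\infty}\frac{8t}{\pi^2(2k-1)^{2}+4t^2},
\end{equation*}
which follows from \eqref{tan-expansion} together with $\tanh t=-i\tan(it)$. The sole structural change from the tangent case is the sign of the $4t^2$ term in the denominator; since $\pi^2(2k-1)^2+4t^2>0$ for every real $t$, this expansion — and hence the whole theorem — is valid on all of $\mathbb{R}$, not merely on $|t|<\pi/2$. Writing the summand as $\frac{8t}{\pi^2(2k-1)^2}\cdot\frac{1}{1+q_k}$ with $q_k=\bigl(2t/(\pi(2k-1))\bigr)^2$, I would apply \eqref{identity1/(1-q)} in the form $\frac{1}{1+q}=\sum_{j=0}^{N-1}(-1)^jq^j+\frac{(-1)^Nq^N}{1+q}$.

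The alternating factors $(-1)^j$ are precisely what turn the $|B_{2j}|$ of Theorem~\ref{Thm1-remainder-tan} into the signed $B_{2j}$ of \eqref{Thm2-remainder-tanh-expansion}. Interchanging the finite $j$-sum with the infinite $k$-sum in the polynomial part and evaluating the inner sums by \eqref{odd-series-Expansion}, I obtain after the reindexing $m=j+1$ the coefficient $(-1)^{m-1}\frac{2^{2m}(2^{2m}-1)|B_{2m}|}{(2m)!}$; the sign rule $(-1)^{m-1}|B_{2m}|=B_{2m}$ then gives the stated main term. The leftover geometric tails collect into a single series, and simplifying the residual factor $\pi^2(2k-1)^2/(\pi^2(2k-1)^2+4t^2)$ delivers exactly \eqref{Thm2-remainder-tanh-tauN}.

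For the representation \eqref{Thm2-remainder-tanh-xi} I would set
\begin{equation*}
\xi(t,N):=\frac{\tau_N(t)}{a_{N+1}(t)},\qquad a_{N+1}(t):=\frac{2^{2N+2}(2^{2N+2}-1)B_{2N+2}}{(2N+2)!}\,t^{2N+1},
\end{equation*}
and establish $0<\xi(t,N)<1$ by a pure sign argument (for $t\neq0$; the case $t=0$ being trivial). By \eqref{Thm2-remainder-tanh-tauN} the series is positive, so $\tau_N(t)$ has the sign of $(-1)^N t^{2N+1}$; since $\operatorname{sign}(B_{2N+2})=(-1)^N$, the term $a_{N+1}(t)$ has this same sign, whence $\xi(t,N)>0$. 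Comparing \eqref{Thm2-remainder-tanh-expansion} at indices $N$ and $N+1$ yields the relation $\tau_N(t)=a_{N+1}(t)+\tau_{N+1}(t)$; because $\tau_{N+1}(t)$ carries the opposite sign to $\tau_N(t)$ and $a_{N+1}(t)$, it follows that $|\tau_N(t)|<|a_{N+1}(t)|$, i.e. $\xi(t,N)<1$.

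The work is essentially a transcription of the tangent computation, so I expect no analytic difficulty: the interchange of summations is immediate (the $j$-sum is finite and each inner $k$-series is a convergent $p$-series with exponent at least $2$). The one point demanding care — and the place where an error is easiest to make — is the sign bookkeeping: tracking the $(-1)^j$ from the geometric expansion, invoking $B_{2m}=(-1)^{m-1}|B_{2m}|$ at the right moment, and correctly identifying the alternation of sign between the consecutive remainders $\tau_N$ and $\tau_{N+1}$ that drives the final inequality.
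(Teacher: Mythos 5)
Your proof is correct, and for the series representation \eqref{Thm2-remainder-tanh-expansion}--\eqref{Thm2-remainder-tanh-tauN} it is essentially the paper's own: partial-fraction expansion of $\tanh$, the finite alternating geometric identity \eqref{identity1/(1+z)}, the odd-index sums \eqref{odd-series-Expansion}, and the sign rule $B_{2m}=(-1)^{m-1}|B_{2m}|$. Two points of comparison. First, the paper quotes the expansion \eqref{tanh-expansion} directly from Gradshteyn--Ryzhik, whereas you derive it from \eqref{tan-expansion} via $\tanh t=-i\tan(it)$; this is legitimate, but it tacitly uses the validity of the Mittag--Leffler expansion of $\tan$ at imaginary arguments, so you should either add a word about analytic continuation or simply cite the hyperbolic formula as the paper does. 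Second, and more substantively, your proof that $0<\xi(t,N)<1$ takes a genuinely different route. The paper sets $\xi(t,N)=g(t)/g(0)$ with $g(t)=\sum_{k\geq1}\bigl[(2k-1)^{2N}\bigl(\pi^2(2k-1)^2+4t^2\bigr)\bigr]^{-1}$ and concludes from $g$ being positive, even and strictly decreasing on $(0,\infty)$ that $0<g(t)<g(0)$ for $t\neq0$. You instead compare \eqref{Thm2-remainder-tanh-expansion} at orders $N$ and $N+1$ to get $\tau_N(t)=a_{N+1}(t)+\tau_{N+1}(t)$, where $a_{N+1}(t)=\frac{2^{2N+2}(2^{2N+2}-1)B_{2N+2}}{(2N+2)!}t^{2N+1}$, and then use the sign alternation $\operatorname{sign}\tau_{N+1}=-\operatorname{sign}\tau_N=-\operatorname{sign}a_{N+1}$ (valid for $t\neq0$, since the series in \eqref{Thm2-remainder-tanh-tauN} is strictly positive) to write $\xi=1+\tau_{N+1}/a_{N+1}\in(0,1)$. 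Both arguments are complete and elementary: the paper's is self-contained at a fixed $N$ and its monotonicity device recurs later in the section, while yours is the classical enveloping-series argument, needs no monotonicity claim at all, and makes the bracketing of $\tanh t$ by consecutive partial sums, i.e. \eqref{Thm2-Corollary-tanhInequality}, completely transparent.
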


\begin{proof}
It follows from \cite[p. 44]{Gradshteyn2000} that
\begin{equation}\label{tanh-expansion}
\tanh\frac{\pi x}{2}=\frac{4x}{\pi}\sum_{k=1}^{\infty}\frac{1}{(2k-1)^2+x^2}.
\end{equation}
Replacement of $x$ by $2t/\pi$ yields
\begin{equation}\label{tanh-expansion-re}
\tanh t=\sum_{k=1}^{\infty}\frac{8t}{\pi^2(2k-1)^{2}+4t^2}=\frac{8t}{\pi^2}\sum_{k=1}^{\infty}\frac{1}{(2k-1)^2\left(1+\left(\frac{2t}{\pi(2k-1)}\right)^2\right)}.
\end{equation}
Using the following identity:
\begin{equation}\label{identity1/(1+z)}
\frac{1}{1+q}=\sum_{j=0}^{N-1}(-1)^{j}q^{j}+(-1)^{N}\frac{q^N}{1+q}\qquad
(q\not=-1)
\end{equation}
and \eqref{odd-series-Expansion}, we obtain from \eqref{tanh-expansion-re} that
\begin{align*}
\tanh t&=\frac{8t}{\pi^2}\sum_{k=1}^{\infty}\frac{1}{(2k-1)^2}\left(\sum_{j=0}^{N-1}(-1)^{j}\left(\frac{2t}{\pi(2k-1)}\right)^{2j}+(-1)^{N}\frac{\left(\frac{2t}{\pi(2k-1)}\right)^{2N}}{1+\left(\frac{2t}{\pi(2k-1)}\right)^2}\right)\\
&=\sum_{j=1}^{N}\frac{2^{2j}(2^{2j}-1){B_{2j}}}{(2j)!}t^{2j-1}+\tau_N(t),
\end{align*}
where
\begin{align*}
\tau_N(t)=(-1)^{N}\frac{2^{2N+3}t^{2N+1}}{\pi^{2N}}\sum_{k=1}^{\infty}\frac{1}{(2k-1)^{2N}\Big(\pi^2(2k-1)^2+4t^2\Big)}.
\end{align*}

Noting that \eqref{odd-series-Expansion} holds, we can rewrite $\tau_N(t)$ as
\begin{align*}
\tau_N(t)=\xi(t,N)\,\frac{2^{2N+2}(2^{2N+2}-1) B_{2N+2}}{(2N+2)!}\,t^{2N+1},
\end{align*}
where
\begin{align*}
\xi(t,N):=\frac{g(t)}{g(0)},\qquad g(t):=\sum_{k=1}^{\infty}\frac{1}{(2k-1)^{2N}\Big(\pi^2(2k-1)^2+4t^2\Big)}.
\end{align*}
Obviously, the even function $g(t)>0$ and is strictly decreasing for $t>0$. Hence, for
$t \not=0$, $0<g(t)<g(0)$ and thus $0<\xi(t, N)<1$. The
proof of Theorem \ref{Thm2-remainder-tanh} is complete.
\end{proof}

From \eqref{Thm2-remainder-tanh-expansion},
we obtain the following
\begin{corollary}
For  $t\not=0$, we have
\begin{align}\label{Thm2-remainder-tanh-expansionre}
(-1)^{N}\left(\frac{\tanh t}{t}-\sum_{j=1}^{N}\frac{2^{2j}(2^{2j}-1){B_{2j}}}{(2j)!}t^{2j-2}\right)>0,
\end{align}
that is,
\begin{align}\label{Thm2-Corollary-tanhInequality}
\sum_{j=1}^{2m}\frac{2^{2j}(2^{2j}-1){B_{2j}}}{(2j)!}t^{2j-2}<
\frac{\tanh t}{t}<\sum_{j=1}^{2m-1}\frac{2^{2j}(2^{2j}-1){B_{2j}}}{(2j)!}t^{2j-2}.
\end{align}
\end{corollary}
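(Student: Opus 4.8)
The plan is to read the corollary off directly from the remainder formula \eqref{Thm2-remainder-tanh-tauN} established in Theorem \ref{Thm2-remainder-tanh}, so that no fresh analysis is required. Dividing the expansion \eqref{Thm2-remainder-tanh-expansion} by $t$ and transposing the finite sum, I would first record the identity
\[
\frac{\tanh t}{t}-\sum_{j=1}^{N}\frac{2^{2j}(2^{2j}-1){B_{2j}}}{(2j)!}t^{2j-2}=\frac{\tau_N(t)}{t}=(-1)^{N}\frac{2^{2N+3}t^{2N}}{\pi^{2N}}\sum_{k=1}^{\infty}\frac{1}{(2k-1)^{2N}\Big(\pi^2(2k-1)^2+4t^2\Big)}.
\]

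The key step is to observe that, apart from the sign factor $(-1)^N$, the right-hand side is manifestly positive whenever $t\neq0$: the prefactor $2^{2N+3}t^{2N}/\pi^{2N}$ is strictly positive (here $t^{2N}>0$ precisely because $t\neq0$), and every summand is positive since $\pi^2(2k-1)^2+4t^2>0$. Multiplying through by $(-1)^N$ thus cancels the sign and gives $(-1)^N\tau_N(t)/t>0$, which is exactly the claimed inequality \eqref{Thm2-remainder-tanh-expansionre}.

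To reach the two-sided form \eqref{Thm2-Corollary-tanhInequality}, I would then specialize the parity of $N$. Choosing $N=2m$ makes $(-1)^N=1$, so \eqref{Thm2-remainder-tanh-expansionre} reads $\tanh t/t>\sum_{j=1}^{2m}\cdots$, the lower bound; choosing $N=2m-1$ makes $(-1)^N=-1$, which reverses the inequality to $\tanh t/t<\sum_{j=1}^{2m-1}\cdots$, the upper bound. Concatenating these two estimates produces \eqref{Thm2-Corollary-tanhInequality}.

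There is essentially no analytic obstacle to overcome, since the series representation and its convergence were already secured in Theorem \ref{Thm2-remainder-tanh}; the corollary is a piece of sign bookkeeping. The only point demanding care is tracking the parity-dependent factor $(-1)^N$ so that the even and odd truncations get assigned to the lower and upper bounds in the correct order, together with the remark that the strictness of the inequalities rests on the strict positivity of $t^{2N}$, hence on the hypothesis $t\neq0$.
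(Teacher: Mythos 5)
Your proposal is correct and is essentially the paper's own argument: the paper derives this corollary directly from \eqref{Thm2-remainder-tanh-expansion} by noting that $\tau_N(t)/t = (-1)^N\,\frac{2^{2N+3}t^{2N}}{\pi^{2N}}\sum_{k=1}^{\infty}\frac{1}{(2k-1)^{2N}\left(\pi^2(2k-1)^2+4t^2\right)}$ is $(-1)^N$ times a strictly positive quantity for $t\neq 0$, exactly as you do, and then reads off the two bounds by choosing $N=2m$ and $N=2m-1$. Your handling of the sign (noting $t^{2N+1}/t=t^{2N}>0$ so no case distinction on the sign of $t$ is needed) is the only point of care, and you got it right.
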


In analogy with  \eqref{Thm1-tan-Banjac}, we now establish the inequality for $\tanh t/t$.
Write \eqref{Thm2-remainder-tanh-expansion} as
\begin{align}\label{Thm2-remainder-tanh-expansionre}
&(-1)^{N}\left(\frac{\tanh t}{t}-\sum_{j=1}^{N}\frac{2^{2j}(2^{2j}-1){B_{2j}}}{(2j)!}t^{2j-2}\right)\nonumber\\
&\quad=\frac{2^{2N+3}t^{2N}}{\pi^{2N}}\left\{\frac{1}{\pi^2+4t^2}+\sum_{k=2}^{\infty}\frac{1}{(2k-1)^{2N}\Big(\pi^2(2k-1)^2+4t^2\Big)}\right\}.
\end{align}
Noting that the even function
\begin{align*}
G(t):=\sum_{k=2}^{\infty}\frac{1}{(2k-1)^{2N}\Big(\pi^2(2k-1)^2+4t^2\Big)}
\end{align*}
is strictly decreasing for $t>0$, we then obtain from \eqref{Thm2-remainder-tanh-expansionre}  that for $t\not=0$,
\begin{align}\label{Thm2-remainder-tanh-expansionreObtain}
&(-1)^{N}\left(\frac{\tanh t}{t}-\sum_{j=1}^{N}\frac{2^{2j}(2^{2j}-1){B_{2j}}}{(2j)!}t^{2j-2}\right)\nonumber\\
&\qquad\qquad<\frac{2^{2N+3}t^{2N}}{\pi^{2N}}\left\{\frac{1}{\pi^2+4t^2}+\sum_{k=2}^{\infty}\frac{1}{\pi^{2}(2k-1)^{2N+2}}\right\}.
\end{align}
The choice $N=1$ and $N=2$ in \eqref{Thm2-remainder-tanh-expansionreObtain}, respectively, yields
\begin{align}\label{Thm2-remainder-tanh-expansionreObtainN=1}
1-\frac{32t^{2}}{\pi^{2}(\pi^2+4t^2)}-\frac{32t^{2}}{\pi^{4}}\sum_{k=2}^{\infty}\frac{1}{(2k-1)^{4}}<\frac{\tanh t}{t}
\end{align}
and
\begin{align}\label{Thm2-remainder-tanh-expansionreObtainN=2}
\frac{\tanh t}{t}<1-\frac{1}{3}t^{2}+\frac{128t^{4}}{\pi^{4}(\pi^2+4t^2)}+\frac{128t^{4}}{\pi^{6}}\sum_{k=2}^{\infty}\frac{1}{(2k-1)^{6}}.
\end{align}
Noting that
\begin{align*}
\sum_{k=2}^{\infty}\frac{1}{(2k-1)^{4}}=\frac{\pi^4}{96}-1,  \quad \sum_{k=2}^{\infty}\frac{1}{(2k-1)^{6}}=\frac{\pi^6}{960}-1,
\end{align*}
we obtain from \eqref{Thm2-remainder-tanh-expansionreObtainN=1} and \eqref{Thm2-remainder-tanh-expansionreObtainN=2} that for $t\not=0$,
\begin{align}\label{Thm2-remainder-tanh-expansionreObtainN=1N=2}
&\frac{\pi^2+\left(4-\frac{\pi^2}{3}\right)t^2-\left(\frac{4}{3}-\frac{128}{\pi^4}\right)t^4}{\pi^2+4t^2}<\frac{\tanh t}{t}\nonumber\\
&\qquad\qquad\qquad<\frac{\pi^2+\left(4-\frac{\pi^2}{3}\right)t^2-\left(\frac{4}{3}-\frac{2\pi^2}{15}\right)t^4+\left(\frac{8}{15}-\frac{512}{\pi^6}\right)t^6}{\pi^2+4t^2},
\end{align}
which is an analogous result to \eqref{Thm1-tan-Banjac}.

\begin{theorem}\label{Thm-remainder-sec}
Let $N\geq0$ be an integer. Then for $|t|<\pi/2$, we have
\begin{align}\label{Thm1-sec-series-representation}
\sec t=\sum_{j=0}^{N-1}\frac{|E_{2j}|}{(2j)!}t^{2j}+\omega_N(t),
\end{align}
where
\begin{align}\label{Thm1-remainder-sec-series-representation}
\omega_N(t)=\frac{2^{2N+2}t^{2N}}{\pi^{2N-1}}\sum_{k=1}^{\infty}\frac{(-1)^{k+1}}{(2k-1)^{2N-1}\Big(\pi^2(2k-1)^2-4t^2\Big)}.
\end{align}
\end{theorem}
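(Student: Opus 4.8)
The plan is to follow the same scheme used for Theorems \ref{Thm1-remainder-tan} and \ref{Thm2-remainder-tanh}: start from a Mittag--Leffler (partial-fraction) expansion of $\sec$, normalize it to the variable $t$, expand the resulting geometric factor by the identity \eqref{identity1/(1-q)}, and recognize the leading finite sum through a closed-form evaluation of an alternating series over odd integers. The only structural novelty compared with the $\tan$ and $\tanh$ cases is that the secant expansion carries alternating signs and a linear factor $(2k-1)$ in the numerator, and that the relevant closed form involves Euler numbers rather than Bernoulli numbers.

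Concretely, first I would record the partial-fraction expansion
\begin{equation*}
\sec\frac{\pi x}{2}=\frac{4}{\pi}\sum_{k=1}^{\infty}\frac{(-1)^{k+1}(2k-1)}{(2k-1)^2-x^2},
\end{equation*}
which stands in the same family as the expansions of $\tan(\pi x/2)$ and $\tanh(\pi x/2)$ quoted from \cite[p.~44]{Gradshteyn2000}. Replacing $x$ by $2t/\pi$ gives
\begin{equation*}
\sec t=4\pi\sum_{k=1}^{\infty}\frac{(-1)^{k+1}(2k-1)}{\pi^2(2k-1)^2-4t^2}=\frac{4}{\pi}\sum_{k=1}^{\infty}\frac{(-1)^{k+1}}{(2k-1)\left(1-\left(\frac{2t}{\pi(2k-1)}\right)^2\right)},
\end{equation*}
valid for $|t|<\pi/2$, where the extracted factor is exactly the shape to which \eqref{identity1/(1-q)} applies with $q=(2t/(\pi(2k-1)))^2$.

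Next I would substitute \eqref{identity1/(1-q)} and split the sum over $k$ into the finite part $\sum_{j=0}^{N-1}$ and the tail. The finite part produces $\frac{4}{\pi}\sum_{j=0}^{N-1}(2/\pi)^{2j}t^{2j}\sum_{k=1}^{\infty}(-1)^{k+1}/(2k-1)^{2j+1}$, so to identify it with $\sum_{j=0}^{N-1}|E_{2j}|t^{2j}/(2j)!$ I need the Euler-number analogue of \eqref{odd-series-Expansion}, namely
\begin{equation*}
\sum_{k=1}^{\infty}\frac{(-1)^{k+1}}{(2k-1)^{2n+1}}=\frac{\pi^{2n+1}|E_{2n}|}{2^{2n+2}(2n)!}.
\end{equation*}
With this in hand the prefactors telescope, since $\frac{4}{\pi}\cdot(2/\pi)^{2j}\cdot\pi^{2j+1}/2^{2j+2}=1$, leaving precisely the stated polynomial part; meanwhile the tail, after writing $(1-(2t/(\pi(2k-1)))^2)^{-1}=\pi^2(2k-1)^2/(\pi^2(2k-1)^2-4t^2)$, rearranges into $\omega_N(t)$ as displayed in \eqref{Thm1-remainder-sec-series-representation}.

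The routine algebra aside, the genuine work is in securing the two analytic inputs. The first is the secant partial fraction itself; if not cited directly I would derive it from the Mittag--Leffler expansion $\sec z=\sum_{k\in\mathbb{Z}}(-1)^{k+1}/(z-(k+\frac{1}{2})\pi)$ by pairing the poles at $\pm(k+\frac{1}{2})\pi$. The second is the alternating odd-power sum, i.e.\ the Dirichlet beta value $\beta(2n+1)$, for which the sign bookkeeping $E_{2n}=(-1)^n|E_{2n}|$ must be handled so that the Euler number enters through its absolute value as in \eqref{Thm1-sec-series-representation}. I expect the only point demanding a word of care to be convergence: the leading alternating series (the $j=0$ term, equivalently the case $N=0$) converges only conditionally, so I would apply \eqref{identity1/(1-q)} termwise and note that for $|t|<\pi/2$ the splitting involves merely finitely many such convergent sums, after which the tail series converges absolutely for every $N\geq1$, the case $N=0$ returning the base expansion itself.
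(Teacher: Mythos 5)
Your proposal is correct and follows essentially the same route as the paper's own proof: the same partial-fraction expansion of $\sec(\pi x/2)$ from Gradshteyn--Ryzhik, the same substitution $x=2t/\pi$, the same use of \eqref{identity1/(1-q)}, and the same identification of the polynomial part via the alternating odd-power (Dirichlet beta) sum, with the tail rearranging into $\omega_N(t)$. Incidentally, the constant in your beta-value identity, $\sum_{k=1}^{\infty}(-1)^{k+1}(2k-1)^{-(2n+1)}=\pi^{2n+1}|E_{2n}|/\bigl(2^{2n+2}(2n)!\bigr)$, is the correct one (it is what makes the prefactors collapse to $1$), whereas the paper's displayed \eqref{odd-series-Expansion-Euler-constant} carries a typographical error ($2^{2n+1}$ in place of $2^{2n+2}$).
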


\begin{proof}
It follows from \cite[p. 44]{Gradshteyn2000} that
\begin{equation*}
\sec\frac{\pi x}{2}=\frac{4}{\pi}\sum_{k=1}^{\infty}(-1)^{k+1}\frac{2k-1}{(2k-1)^{2}-x^2}.
\end{equation*}
Replacement of $x$ by $2t/\pi$ yields
\begin{equation}\label{sec-expansion}
\sec t=\frac{4}{\pi}\sum_{k=1}^{\infty}\frac{(-1)^{k+1}}{(2k-1)\Big(1-\big(\frac{2t}{\pi(2k-1)}\big)^2\Big)}.
\end{equation}
Using \eqref{identity1/(1-q)} and the following identity:
\begin{equation}\label{odd-series-Expansion-Euler-constant}
\sum_{k=1}^{\infty}\frac{(-1)^{k+1}}{(2k-1)^{2n+1}}=\frac{\pi^{2n+1}}{2^{2n+1}(2n)!}|E_{2n}|
\end{equation}
(see \cite[p. 8]{Gradshteyn2000}), we obtain from \eqref{sec-expansion}  that
\begin{align*}
\sec t&=\frac{4}{\pi}\sum_{k=1}^{\infty}\frac{(-1)^{k+1}}{2k-1}\left(\sum_{j=0}^{N-1}\left(\frac{2t}{\pi(2k-1)}\right)^{2j}+\frac{\left(\frac{2t}{\pi(2k-1)}\right)^{2N}}{1-\left(\frac{2t}{\pi(2k-1)}\right)^2}\right)\\
&=\sum_{j=0}^{N-1}\frac{|E_{2j}|}{(2j)!}t^{2j}+\omega_N(t),
\end{align*}
where
\begin{align*}
\omega_N(t)=\frac{2^{2N+2}t^{2N}}{\pi^{2N-1}}\sum_{k=1}^{\infty}(-1)^{k+1}\frac{1}{(2k-1)^{2N-1}\Big(\pi^2(2k-1)^2-4t^2\Big)}.
\end{align*}
 The proof of Theorem \ref{Thm-remainder-sec} is complete.
\end{proof}

Chen and Sandor \cite[Theorem 3.1(i)]{Chen-Sandor203--217} proved that for  $0<|t|<\pi/2$,
\begin{equation}\label{sec-inequality1}
\frac{\pi^2}{\pi^2-4t^2}<\sec t<\frac{4\pi}{\pi^2-4t^2}.
\end{equation}
The constants $\pi^2$ and $4\pi$ are  best possible.

Write \eqref{Thm1-sec-series-representation} as
\begin{align}\label{Thm1-sec-series-representationre}
\sec t&=\sum_{j=0}^{N-1}\frac{|E_{2j}|}{(2j)!}t^{2j}+\frac{2^{2N+2}t^{2N}}{\pi^{2N-1}}\left\{\frac{1}{\pi^2-4t^2}+\sum_{k=2}^{\infty}\frac{(-1)^{k+1}}{(2k-1)^{2N-1}\Big(\pi^2(2k-1)^2-4t^2\Big)}\right\}\nonumber\\
&=\sum_{j=0}^{N-1}\frac{|E_{2j}|}{(2j)!}t^{2j}+\frac{2^{2N+2}t^{2N}}{\pi^{2N-1}(\pi^2-4t^2)}\nonumber\\
&\quad+\frac{2^{2N+2}t^{2N}}{\pi^{2N-1}}\sum_{k=2}^{\infty}(-1)^{k+1}\frac{1}{(2k-1)^{2N-1}\Big(\pi^2(2k-1)^2-4t^2\Big)}.
\end{align}

Let
\begin{align*}
H(t)=\sum_{k=2}^{\infty}(-1)^{k+1}\frac{1}{(2k-1)^{2N-1}\Big(\pi^2(2k-1)^2-4t^2\Big)}.
\end{align*}
Differentiation yields
\begin{align*}
H'(t)=-8t\sum_{k=2}^{\infty}(-1)^{k}\eta_{k},\quad  \eta_k=\frac{1}{(2k-1)^{2N-1}\Big(\pi^2(2k-1)^2-4t^2\Big)^2}.
\end{align*}
Then it is easily seen that $\eta_{2k}>\eta_{2k+1}$ for $k\in\mathbb{N}$, $0<t<\pi/2$ and $N\in\mathbb{N}$; thus $H'(t)<0$ for $0<t<\pi/2$.
Hence, for all $0<t<\pi/2$ and $N\in\mathbb{N}$, we have $H(\pi/2)<H(t)<H(0)$. We then obtain from \eqref{Thm1-sec-series-representationre} that for $0<|t|<\frac{\pi}{2}$,
\begin{align}\label{Thm1-sec-series-representationreObtain}
&\sum_{j=0}^{N-1}\frac{|E_{2j}|}{(2j)!}t^{2j}+\frac{2^{2N+2}t^{2N}}{\pi^{2N-1}(\pi^2-4t^2)}+\frac{2^{2N}t^{2N}}{\pi^{2N+1}}\sum_{k=2}^{\infty}\frac{(-1)^{k+1}}{k(k-1)(2k-1)^{2N-1}}\nonumber\\
&\qquad<\sec t<\sum_{j=0}^{N-1}\frac{|E_{2j}|}{(2j)!}t^{2j}+\frac{2^{2N+2}t^{2N}}{\pi^{2N-1}(\pi^2-4t^2)}+\frac{2^{2N+2}t^{2N}}{\pi^{2N+1}}\sum_{k=2}^{\infty}\frac{(-1)^{k+1}}{(2k-1)^{2N+1}}.
\end{align}

Direct computations yield
\begin{align*}
\sum_{k=2}^{\infty}\frac{(-1)^{k+1}}{k(k-1)(2k-1)}=3-\pi,\qquad\qquad \,\,     \sum_{k=2}^{\infty}\frac{(-1)^{k+1}}{(2k-1)^{3}}=\frac{\pi^3}{32}-1.
\end{align*}
The choice $N=1$ in \eqref{Thm1-sec-series-representationreObtain} then yields, for $0<|t|<\frac{\pi}{2}$,
\begin{align}\label{Thm1-sec-series-representationreObtainN=1}
\frac{\pi^2+\frac{28-8\pi}{\pi}t^2+\frac{-48+16\pi}{\pi^3}t^4}{\pi^2-4t^2}<\sec t<\frac{\pi^2-\frac{8-\pi^2}{2}t^2-\frac{4\pi^3-128}{2\pi^3}t^4}{\pi^2-4t^2},
\end{align}
which  improves the inequality \eqref{sec-inequality1}.

\begin{theorem}\label{Thm2-remainder-cot}
For $0<|t|<\pi$, we have
\begin{align}\label{Thm2-cot-series-representation}
\cot t=\frac{1}{t}-\sum_{j=1}^N\frac{2^{2j}{|B_{2j}|}}{(2j)!}t^{2j-1}+\theta_N(t),
\end{align}
where
\begin{align}\label{Thm2-remainder-cot-series-representation}
\theta_N(t)=\frac{2t^{2N+1}}{\pi^{2N}}\sum_{k=1}^{\infty}\frac{1}{k^{2N}\big(t^2-\pi^2k^2\big)}.
\end{align}
\end{theorem}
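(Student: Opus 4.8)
The plan is to mirror the proofs of Theorems \ref{Thm1-remainder-tan}--\ref{Thm-remainder-sec}, starting from the Mittag--Leffler (partial-fraction) expansion of the cotangent rather than one of the ``odd denominator'' expansions used there. The relevant expansion, the companion of \eqref{tan-expansion} and \eqref{tanh-expansion}, is the classical
\[
\cot t=\frac{1}{t}+\sum_{k=1}^{\infty}\frac{2t}{t^2-\pi^2k^2},\qquad 0<|t|<\pi,
\]
which follows from the formula on \cite[p. 44]{Gradshteyn2000} after replacing the variable by $t/\pi$. Since $0<|t|<\pi$ forces $|t/(\pi k)|<1$ for every $k\geq1$, I would rewrite each summand as
\[
\frac{2t}{t^2-\pi^2k^2}=-\frac{2t}{\pi^2k^2}\cdot\frac{1}{1-\left(\frac{t}{\pi k}\right)^2},
\]
which puts it in exactly the shape to which the geometric-sum identity \eqref{identity1/(1-q)} applies, with $q=(t/(\pi k))^2$.

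Next I would insert \eqref{identity1/(1-q)}, split the resulting double series into the polynomial part (the terms $q^{j}$, $j=0,\dots,N-1$) and the tail (the term $q^N/(1-q)$), and interchange the order of summation in the polynomial part. Because $|t/(\pi k)|<1$ holds uniformly, both double sums converge absolutely and this rearrangement is legitimate. The inner $k$-sum of the polynomial part is then $\sum_{k\ge1}k^{-(2j+2)}=\zeta(2j+2)$, which I would evaluate through the even-zeta/Bernoulli identity $\sum_{k=1}^{\infty}k^{-2n}=2^{2n-1}\pi^{2n}|B_{2n}|/(2n)!$. This is the companion of \eqref{odd-series-Expansion}: it can be recovered from \eqref{odd-series-Expansion} by separating odd and even indices, or cited directly from \cite[p. 8]{Gradshteyn2000}.

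The step demanding care is the bookkeeping: after substituting the zeta value, the powers of $\pi$ and of $2$ must cancel so that the polynomial part collapses to $-\sum_{j=1}^{N}\frac{2^{2j}|B_{2j}|}{(2j)!}t^{2j-1}$, and reindexing by $i=j+1$ makes this explicit and matches the stated main sum. For the tail, resumming $q^N/(1-q)=\frac{t^{2N}}{\pi^{2N}k^{2N}}\cdot\frac{\pi^2k^2}{\pi^2k^2-t^2}$ and restoring the overall factor $-2t/\pi^2$ yields precisely $\theta_N(t)$ once the sign of $\pi^2k^2-t^2$ is flipped to give $t^2-\pi^2k^2$. I expect the only genuine obstacle to be this index/constant alignment in the Bernoulli identity, together with tracking the sign convention $|B_{2j}|=(-1)^{j+1}B_{2j}$; the analytic content is otherwise identical to the earlier theorems, and the term-by-term rearrangement needs only the routine absolute-convergence remark above.
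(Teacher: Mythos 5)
Your proposal is correct and takes essentially the same route as the paper's own proof: it starts from the same partial-fraction expansion \eqref{cot-expansion} (the paper cites it from the NIST Handbook rather than Gradshteyn--Ryzhik, but it is the identical formula), applies the same geometric-sum identity \eqref{identity1/(1-q)} with $q=(t/(\pi k))^2$, and evaluates the polynomial part with the same even-index zeta/Bernoulli identity \eqref{even-series-Expansion}, with the tail resumming to $\theta_N(t)$ exactly as in the paper. The only differences are cosmetic: your explicit remark on absolute convergence to justify interchanging the order of summation, which the paper leaves implicit.
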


\begin{proof}
It follows from \cite[p. 118]{Olver-Lozier-Boisvert-Clarks2010} that
\begin{equation}\label{cot-expansion}
\cot t=\frac{1}{t}+2t\sum_{k=1}^{\infty}\frac{1}{t^2-\pi^{2}k^{2}},
\end{equation}
which can be written as
\begin{equation}\label{cot-expansion-re}
\cot t=\frac{1}{t}-2t\sum_{k=1}^{\infty}\frac{1}{(k\pi)^2\left(1-\left(\frac{t}{k\pi}\right)^2\right)}.
\end{equation}

Using \eqref{identity1/(1-q)} and the following identity:
\begin{equation}\label{even-series-Expansion}
\sum_{k=1}^{\infty}\frac{1}{k^{2n}}=\frac{2^{2n-1}\pi^{2n}}{(2n)!}|B_{2n}|
\end{equation}
(see \cite[p. 8]{Gradshteyn2000}), we obtain from \eqref{cot-expansion-re} that
\begin{align*}
\cot t&=\frac{1}{t}-2t\sum_{k=1}^{\infty}\frac{1}{(k\pi)^2}\left(\sum_{j=0}^{N-1}\left(\frac{t}{k\pi}\right)^{2j}+\frac{\left(\frac{t}{k\pi}\right)^{2N}}{1-\left(\frac{t}{k\pi}\right)^2}\right)\\
&=\frac{1}{t}-2\sum_{j=1}^N\frac{2^{2j-1}{|B_{2j}|}}{(2j)!}t^{2j-1}+\theta_N(t),
\end{align*}
where
\begin{align*}
\theta_N(t)=\frac{2t^{2N+1}}{\pi^{2N}}\sum_{k=1}^{\infty}\frac{1}{k^{2N}\big(t^2-\pi^2k^2\big)}.
\end{align*}
 The proof of Theorem \ref{Thm2-remainder-cot} is complete.
\end{proof}

\begin{theorem}\label{Thm2-remainder-1/sin}
For $0<|t|<\pi$, we have
\begin{align}\label{Thm1-csc-series-representation}
\csc t=\frac{1}{t}+\sum_{j=1}^N\frac{(2^{2j}-2){|B_{2j}|}}{(2j)!}t^{2j-1}+r_N(t),
\end{align}
where
\begin{align}\label{Thm1-remainder-csc-series-representation}
r_N(t)=\frac{2t^{2N+1}}{\pi^{2N}}\sum_{k=1}^{\infty}\frac{(-1)^{k+1}}{k^{2N}\big(\pi^2k^2-t^2\big)}.
\end{align}
\end{theorem}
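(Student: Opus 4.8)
The plan is to follow verbatim the argument used for Theorem \ref{Thm2-remainder-cot}, substituting the alternating partial-fraction expansion of $\csc t$ for that of $\cot t$. First I would invoke the standard representation
\[
\csc t=\frac{1}{t}+2t\sum_{k=1}^{\infty}\frac{(-1)^{k}}{t^2-\pi^2k^2},
\]
which is the companion of \eqref{cot-expansion} and is available in the same references. Writing $t^2-\pi^2k^2=-(k\pi)^2\big(1-(t/(k\pi))^2\big)$ recasts this as
\[
\csc t=\frac{1}{t}+\frac{2t}{\pi^2}\sum_{k=1}^{\infty}\frac{(-1)^{k+1}}{k^2\big(1-(t/(k\pi))^2\big)},
\]
the exact alternating analogue of \eqref{cot-expansion-re}.

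Next I would apply the geometric-sum identity \eqref{identity1/(1-q)} with $q=(t/(k\pi))^2$, splitting each summand into its first $N$ even powers plus a tail. The absolute convergence of the double series for $|t|<\pi$ licenses interchanging the $j$- and $k$-summations in the finite part, which leaves the inner sums $\sum_{k=1}^{\infty}(-1)^{k+1}k^{-(2j+2)}$ for $j=0,1,\dots,N-1$; the tail simplifies, exactly as in Theorem \ref{Thm2-remainder-cot}, to the claimed remainder $r_N(t)$ of \eqref{Thm1-remainder-csc-series-representation}.

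The single ingredient not recorded verbatim in the excerpt is the Dirichlet eta value at even integers. I would derive $\sum_{k=1}^{\infty}(-1)^{k+1}k^{-2n}=(1-2^{1-2n})\sum_{k=1}^{\infty}k^{-2n}$ in one line by removing twice the even-indexed subseries, and then feed in \eqref{even-series-Expansion} to obtain $\sum_{k=1}^{\infty}(-1)^{k+1}k^{-2n}=(2^{2n-1}-1)\pi^{2n}|B_{2n}|/(2n)!$. Taking $n=j+1$ and shifting the summation index converts the finite part into $\sum_{j=1}^{N}(2^{2j}-2)|B_{2j}|t^{2j-1}/(2j)!$, giving \eqref{Thm1-csc-series-representation}. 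No step is a genuine obstacle; the only place demanding care is the arithmetic $2(2^{2j-1}-1)=2^{2j}-2$ under the index shift, which is precisely what produces the coefficient $(2^{2j}-2)$ distinguishing $\csc$ from the $2^{2j}$ pattern of $\cot$ in \eqref{Thm2-cot-series-representation}.
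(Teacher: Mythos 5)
Your proposal is correct and follows essentially the same route as the paper: the alternating partial-fraction expansion of $\csc t$ (your sign convention $(-1)^{k}/(t^2-\pi^2k^2)$ is identical to the paper's $(-1)^{k+1}/\big((k\pi)^2-t^2\big)$), the geometric-sum identity \eqref{identity1/(1-q)}, and the Dirichlet eta values at even integers, with the same coefficient arithmetic $2(2^{2j-1}-1)=2^{2j}-2$. The only cosmetic difference is that the paper cites the eta identity \eqref{even-series-ExpansionA} directly from Gradshteyn--Ryzhik, whereas you derive it in one line from \eqref{even-series-Expansion}; this changes nothing of substance.
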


\begin{proof}
It follows from \cite[p. 118]{Olver-Lozier-Boisvert-Clarks2010} that
\begin{equation}\label{csc-expansion}
\csc t=\frac{1}{t}+2t\sum_{k=1}^{\infty}\frac{(-1)^{k+1}}{(k\pi)^{2}-t^2},
\end{equation}
which can be written as
\begin{equation}\label{csc-expansion-re}
\csc t=\frac{1}{t}+2t\sum_{k=1}^{\infty}\frac{(-1)^{k+1}}{(k\pi)^2\left(1-\left(\frac{t}{k\pi}\right)^2\right)}.
\end{equation}

Using \eqref{identity1/(1-q)} and the following identity:
\begin{equation}\label{even-series-ExpansionA}
\sum_{k=1}^{\infty}\frac{(-1)^{k+1}}{k^{2n}}=\frac{(2^{2n-1}-1)\pi^{2n}}{(2n)!}|B_{2n}|
\end{equation}
(see \cite[p. 8]{Gradshteyn2000}), we obtain from \eqref{csc-expansion-re} that
\begin{align*}
\csc t&=\frac{1}{t}+2t\sum_{k=1}^{\infty}\frac{(-1)^{k+1}}{(k\pi)^2}\left(\sum_{j=0}^{N-1}\left(\frac{t}{k\pi}\right)^{2j}+\frac{\left(\frac{t}{k\pi}\right)^{2N}}{1-\left(\frac{t}{k\pi}\right)^2}\right)\\
&=\frac{1}{t}+\sum_{j=1}^N\frac{(2^{2j}-2){|B_{2j}|}}{(2j)!}t^{2j-1}+r_N(t),
\end{align*}
where
\begin{align*}
r_N(t)=\frac{2t^{2N+1}}{\pi^{2N}}\sum_{k=1}^{\infty}\frac{(-1)^{k+1}}{k^{2N}\big(\pi^2k^2-t^2\big)}.
\end{align*}
 The proof of Theorem \ref{Thm2-remainder-1/sin} is complete.
\end{proof}
 Theorems \ref{Thm2-remainder-cot} and \ref{Thm2-remainder-1/sin} are used in Section 3.
\vskip 4mm

\section{Wilker- and Huygens-type inequalities}
Wilker  \cite{WilkerE3306} proposed the following two open problems:
\par
(a) Prove that if $0<x<\pi/2$, then
\begin{equation}\label{Wilker-1}
\left(\frac{\sin x}{x}\right)^{2}+\frac{\tan x}{x}>2.
\end{equation}
\par
(b) Find the largest constant c such that
\begin{equation*}\label{Wilker-2}
\left(\frac{\sin x}{x}\right)^{2}+\frac{\tan x}{x}>2+cx^3\tan x
\end{equation*}
for $0<x<\pi/2$.
\par
In \cite{Sumner264--267}, the inequality \eqref{Wilker-1} was proved,
and the following inequality
\begin{equation}\label{Wilker-II}
2+\left(\frac{2}{\pi}\right)^{4}x^3\tan x<\left(\frac{\sin
x}{x}\right)^{2}+\frac{\tan x}{x}<2+\frac{8}{45}x^3\tan x,\qquad
 0<x<\frac{\pi}{2}
\end{equation}
was also established,  where the constants $(2/\pi)^{4}$ and
$\frac{8}{45}$ are the best possible, .

Chen and Cheung \cite{Chen-CheungJIA} showed  that  for $0<x<\pi/2$,
\begin{equation}\label{Wilker-generalization-ineq-1}
2+\frac{8}{45}x^4+\frac{16}{315}x^5\tan x<\left(\frac{\sin
x}{x}\right)^{2}+\frac{\tan
x}{x}<2+\frac{8}{45}x^4+\left(\frac{2}{\pi}\right)^{6}x^5\tan x,
\end{equation}
where the constants $\frac{16}{315}$ and $(2/\pi)^{6}$
are the best possible, and
\begin{align}\label{Wilker-generalization-ineq-2}
2+\frac{8}{45}x^4+\frac{16}{315}x^6+\frac{104}{4725}x^7\tan
x&<\left(\frac{\sin x}{x}\right)^{2}+\frac{\tan
x}{x}\notag\\
&<2+\frac{8}{45}x^4+\frac{16}{315}x^6+\left(\frac{2}{\pi}\right)^{8}x^7\tan
x,
\end{align}
where the constants $\frac{104}{4725}$ and $(2/\pi)^{8}$ are the best possible.

The Wilker-type inequalities \eqref{Wilker-1} and \eqref{Wilker-II} have
attracted much interest of many mathematicians and have motivated a
large number of research papers involving different proofs, various
generalizations and improvements (cf.
\cite{Baricz397--406,Chen-Cheung,Chen-CheungJIA,Guo19--22,Mortitc535--541,Mortitc516--520,Neuman399--407,Neuman715--723,Neuman271--279,Pinelis905--909,Sumner264--267,
Wu529--535,Wu757--765,Chen-Sandor55--67,Wu683--687,Wu447--458,Zhang149-151,Zhu749--750,ZhuID4858422,Zhu1998--2004,ZhuID130821}
and the references cited therein).

A related inequality that is of interest to us is Huygens'
inequality \cite{Huygens}, which asserts that
\begin{equation}\label{Huygens}
2\left(\frac{\sin x}{x}\right)+\frac{\tan x}{x}>3,\qquad 0<|x|<\frac{\pi}{2}.
\end{equation}
Chen and Cheung \cite{Chen-CheungJIA} showed that  for $0<x<\pi/2$,
\begin{equation}\label{Huygens-generalization-ineq-1}
3+\frac{3}{20}x^3\tan x<2\left(\frac{\sin x}{x}\right)+\frac{\tan
x}{x}<3+\left(\frac{2}{\pi}\right)^{4}x^3\tan x,
\end{equation}
where the constants $\frac{3}{20}$ and $(2/\pi)^{4}$
are the best possible, and
\begin{equation}\label{Huygens-generalization-ineq-2}
3+\frac{3}{20}x^4+\frac{3}{56}x^5\tan x<2\left(\frac{\sin
x}{x}\right)+\frac{\tan
x}{x}<3+\frac{3}{20}x^4+\left(\frac{2}{\pi}\right)^{6}x^5\tan x,
\end{equation}
where the constants $\frac{3}{56}$ and $(2/\pi)^{6}$
are the best possible. These authors also posed three conjectures on Wilker and Huygens-type inequalities. As far as we know, these conjectures have not yet been proved.

 Zhu  \cite{Zhu1180--1182} established some new inequalities of
the Huygens-type for trigonometric and hyperbolic functions. Baricz
and S\'andor \cite{Baricz397--406} pointed out that inequalities
\eqref{Wilker-1} and \eqref{Huygens} are simple consequences of the
arithmetic-geometric mean inequality, together with the well-known
Lazarevi\'c-type inequality \cite[p. 238]{Mitrinovic238}
\begin{equation*}
(\cos x)^{1/3}<\frac{\sin x}{x},\qquad
0<|x|<\frac{\pi}{2},
\end{equation*}
or equivalently
\begin{equation}\label{Lazarevic-type-inequality}
\left(\frac{\sin x}{x}\right)^{2}\frac{\tan x}{x}>1,\qquad 0<|x|<\frac{\pi}{2}.
\end{equation}

Wu and Srivastava \cite[Lemma 3]{Wu529--535} established another
inequality
\begin{equation}\label{Wilker-2}
\left(\frac{x}{\sin x}\right)^{2}+\frac{x}{\tan x}>2, \qquad 0<|x|<\frac{\pi}{2}.
\end{equation}
Neuman and S\'andor \cite[Theorem
2.3]{Neuman715--723} proved that for $0<|x|<\pi/2$,
\begin{align}\label{NeumanSandorThm2.3}
\frac{\sin x}{x}<\frac{2+\cos x}{3}<\frac{1}{2}\left(\frac{x}{\sin
x}+\cos x\right).
\end{align}
By multiplying both sides of  inequality \eqref{NeumanSandorThm2.3}
by $x/\sin x$, we obtain that for $0<|x|<\pi/2$,
\begin{align}\label{Cusa-re}
\frac{1}{2}\left[\left(\frac{x}{\sin x}\right)^{2}+\frac{x}{\tan
x}\right]>\frac{2(x/\sin x)+x/\tan x}{3}>1.
\end{align}

 Chen and S\'andor \cite{Chen-Sandor55--67} established the following inequality
 chain:
\begin{align}\label{LWsuggest}
&\quad\frac{\left(\sin x/x\right)^{2}+\tan x/x}{2}>\left(\frac{\sin
x}{x}\right)^{2}\left(\frac{\tan x}{x}\right)>\frac{2\left(\sin
x/x\right)+\tan x/x}{3}\notag\\
&> \left(\frac{\sin x}{x}\right)^{2/3}\left(\frac{\tan
x}{x}\right)^{1/3}>\frac{1}{2}\left[\left(\frac{x}{\sin
x}\right)^{2}+\frac{x}{\tan x}\right]>\frac{2(x/\sin x)+x/\tan
x}{3}>1
\end{align}
for $0<|x|<\pi/2$.

In this section, we  develop the inequality \eqref{Wilker-2} and the last inequality in \eqref{LWsuggest} to produce   sharp inequalities.

It is well known \cite[p. 42]{Gradshteyn2000} that
\begin{align}\label{expansion-cotx}
\cot x=\frac{1}{x}-\sum_{k=1}^{\infty}\frac{2^{2k}|B_{2k}|}{(2k)!}x^{2k-1},\qquad |x|<\pi.
\end{align}
Differentiating  the expression in \eqref{expansion-cotx}, we find
\begin{align}\label{expansion-csc2x}
\left(\frac{x}{\sin x}\right)^2=1+\sum_{k=1}^{\infty}\frac{2^{2k}(2k-1)|B_{2k}|}{(2k)!}x^{2k},\qquad |x|<\pi.
\end{align}
From \eqref{expansion-cotx} and \eqref{expansion-csc2x}, we obtain that for $|x|<\pi$,
\begin{align}\label{expansion-Wilker}
\left(\frac{x}{\sin x}\right)^2+\frac{x}{\tan x}=2+\sum_{k=1}^{\infty}\frac{k\cdot2^{2k+3}|B_{2k+2}|}{(2k+2)!}x^{2k+2}.
\end{align}
It follows from \eqref{expansion-Wilker} that for every $N\in\mathbb{N}$,
\begin{align}\label{expansion-Wilkerholds}
\frac{N\cdot2^{2N+3}|B_{2N+2}|}{(2N+2)!}x^{2N+2}<\left(\frac{x}{\sin x}\right)^2+\frac{x}{\tan x}-\left(2+\sum_{k=1}^{N-1}\frac{k\cdot2^{2k+3}|B_{2k+2}|}{(2k+2)!}x^{2k+2}\right)
\end{align}
for $0<|x|<\pi$.

In view of \eqref{expansion-Wilkerholds} it is natural to ask: What is the largest number $\lambda_{N}$ and what is the smallest number $\mu_{N}$ such that the inequality
\begin{align*}
\lambda_{N}x^{2N+2}<\left(\frac{x}{\sin x}\right)^2+\frac{x}{\tan x}-\left(2+\sum_{k=1}^{N-1}\frac{k\cdot2^{2k+3}|B_{2k+2}|}{(2k+2)!}x^{2k+2}\right)<\mu_{N}x^{2N+2}
\end{align*}
holds for  $x\in(0, \pi/2)$ and $N\in\mathbb{N}$? Theorem \ref{Wilker-inequality-lambda-mu} answers this question.

\begin{theorem}\label{Wilker-inequality-lambda-mu}
Let $N\geq1$ be an integer. Then for $0<t<\pi/2$,
\begin{equation}\label{Sharp-Wilkerlambda-mu}
\lambda_{N}t^{2N+2}<\left(\frac{t}{\sin t}\right)^2+\frac{t}{\tan t}-\left(2+\sum_{j=1}^{N-1}\frac{j\cdot2^{2j+3}|B_{2j+2}|}{(2j+2)!}t^{2j+2}\right)<\mu_{N}t^{2N+2}
\end{equation}
with the best possible constants
\begin{equation}\label{Wilker-inequality-lambda-Best-lambda}
\lambda_{N}=\frac{N\cdot2^{2N+3}|B_{2N+2}|}{(2N+2)!}
\end{equation}
and
\begin{align}\label{Wilker-inequality-lambda-Best-mu}
\mu_{N}=\frac{64N}{\pi^{2N+2}}\sum_{k=1}^{\infty}\frac{1}{k^{2N-2}(4k^2-1)^2}-\frac{16(N-1)}{\pi^{2N+2}}\sum_{k=1}^{\infty}\frac{1}{k^{2N}(4k^2-1)^2}.
\end{align}
\end{theorem}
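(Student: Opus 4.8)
The plan is to reduce both inequalities in \eqref{Sharp-Wilkerlambda-mu} to the monotonicity of a single auxiliary function and then to compute its two boundary values. Writing $\Phi(t)$ for the quantity sandwiched in \eqref{Sharp-Wilkerlambda-mu} divided by $t^{2N+2}$,
\begin{equation*}
\Phi(t):=\frac{1}{t^{2N+2}}\left[\left(\frac{t}{\sin t}\right)^2+\frac{t}{\tan t}-\left(2+\sum_{j=1}^{N-1}\frac{j\cdot2^{2j+3}|B_{2j+2}|}{(2j+2)!}t^{2j+2}\right)\right],
\end{equation*}
formula \eqref{expansion-Wilker} shows that the bracketed expression is exactly the tail of the Wilker series, so that
\begin{equation*}
\Phi(t)=\sum_{k=N}^{\infty}\frac{k\cdot2^{2k+3}|B_{2k+2}|}{(2k+2)!}\,t^{2k-2N}.
\end{equation*}
Since every coefficient here is positive, $\Phi$ is strictly increasing on $(0,\pi/2)$, its limit as $t\to0^+$ is the leading term $\lambda_N$ of \eqref{Wilker-inequality-lambda-Best-lambda}, and its supremum is the one-sided limit $\Phi(\pi/2^-)$. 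This at once gives the lower bound $\Phi(t)>\lambda_N$ with $\lambda_N$ best possible, so the entire difficulty is concentrated in identifying $\Phi(\pi/2^-)$ with the series $\mu_N$ of \eqref{Wilker-inequality-lambda-Best-mu}.

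To obtain a representation of $\Phi$ suitable for evaluation at the endpoint, I would pass from power series to partial fractions. Differentiating the cotangent expansion \eqref{cot-expansion} yields the partial-fraction form of $\csc^2 t$, and combining it with \eqref{cot-expansion} itself collapses the Wilker combination to the single clean identity
\begin{equation*}
\left(\frac{t}{\sin t}\right)^2+\frac{t}{\tan t}=2+4t^4\sum_{k=1}^{\infty}\frac{1}{(\pi^2k^2-t^2)^2},\qquad 0<t<\pi.
\end{equation*}
Writing $\frac{1}{(\pi^2k^2-t^2)^2}=\frac{1}{\pi^4k^4}\big(1-t^2/(\pi^2k^2)\big)^{-2}$ and differentiating \eqref{identity1/(1-q)} to get the finite expansion $\frac{1}{(1-q)^2}=\sum_{j=0}^{N-2}(j+1)q^{j}+\frac{Nq^{N-1}-(N-1)q^{N}}{(1-q)^2}$, I would split the $k$-sum into a polynomial part and a remainder. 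Summing the polynomial part over $k$ by means of \eqref{even-series-Expansion} (with index $2n=2j+4$) reproduces exactly the finite sum $\sum_{j=1}^{N-1}\frac{j\cdot2^{2j+3}|B_{2j+2}|}{(2j+2)!}t^{2j+2}$ that is subtracted in \eqref{Sharp-Wilkerlambda-mu}, so that the surviving remainder equals the bracketed quantity and hence
\begin{equation*}
\Phi(t)=\frac{4N}{\pi^{2N-2}}\sum_{k=1}^{\infty}\frac{1}{k^{2N-2}(\pi^2k^2-t^2)^2}-\frac{4(N-1)t^{2}}{\pi^{2N}}\sum_{k=1}^{\infty}\frac{1}{k^{2N}(\pi^2k^2-t^2)^2}.
\end{equation*}

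With this representation the upper bound follows by letting $t\to(\pi/2)^-$: at $t=\pi/2$ one has $\pi^2k^2-t^2=\tfrac{\pi^2}{4}(4k^2-1)>0$ for every $k\geq1$, so each of the two series converges, and since the summands increase to their endpoint values, monotone convergence gives $\Phi(\pi/2^-)$ as the value of the right-hand side at $t=\pi/2$. Substituting $(\pi^2k^2-t^2)^2=\tfrac{\pi^4}{16}(4k^2-1)^2$ and collecting the powers of $\pi$ produces precisely the two sums in \eqref{Wilker-inequality-lambda-Best-mu}, so $\Phi(\pi/2^-)=\mu_N$. The monotonicity established in the first paragraph then yields $\Phi(t)<\mu_N$ on $(0,\pi/2)$ with $\mu_N$ best possible, since the bound is approached as $t\to\pi/2$.

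The main obstacle I anticipate is bookkeeping rather than conceptual: executing the finite $1/(1-q)^2$ expansion with the correct coefficients and index shifts so that its $k$-summed polynomial part matches the prescribed Bernoulli sum term by term, and then justifying the termwise passage to the limit at $t=\pi/2$ (uniform convergence of the two partial-fraction series on $[0,\pi/2]$ suffices, and is easy to check against a $\sum k^{-2N+2}$ majorant for the first and $\sum k^{-2N-2}$ for the second). A useful internal check, which also validates the coefficient algebra, is that the power-series representation of $\Phi$ and the partial-fraction representation must agree on $(0,\pi/2)$; the case $N=1$, where the second series and both finite sums are empty, provides a convenient sanity test.
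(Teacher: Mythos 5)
Your proof is correct, and its core coincides with the paper's: both hinge on the partial-fraction identity \eqref{Wilker-inequality-Keyresult}, on a finite expansion of $1/(1-q)^2$ (your single-fraction remainder $\frac{Nq^{N-1}-(N-1)q^{N}}{(1-q)^2}$ is algebraically identical to the paper's \eqref{identity1/(1-q)2}), and on the zeta values \eqref{even-series-Expansion}; the representation you obtain for $\Phi(t)$ is exactly the paper's $\frac{4}{\pi^{2N}}V_{N}(t)$ with $V_{N}(t)=\sum_{k=1}^{\infty}\frac{N\pi^2k^2-(N-1)t^2}{k^{2N}(\pi^{2}k^2-t^2)^2}$, and your endpoint evaluations reproduce \eqref{Wilker-inequality-lambda-Best-lambda} and \eqref{Wilker-inequality-lambda-Best-mu}. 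The one genuine difference is the monotonicity step: the paper differentiates the partial-fraction series and checks $V_{N}'(t)>0$, whereas you read off that $\Phi$ is strictly increasing (and that $\Phi(0^{+})=\lambda_{N}$) from the positivity of the Maclaurin coefficients in \eqref{expansion-Wilker}. Your route is slightly more economical — the lower bound and its sharpness come for free, and no differentiation of a series is needed — at the price of working with two representations of the same function and matching them at $t=\pi/2$, which you do correctly via monotone convergence; the paper stays inside a single representation throughout. One small slip in your closing remark: the uniform-convergence majorant for the first partial-fraction series should be $\sum k^{-2N-2}$ (and $\sum k^{-2N-4}$ for the second), not $\sum k^{-2N+2}$, which diverges when $N=1$; this is harmless, since the monotone-convergence argument you already gave justifies the passage to $t=\pi/2$ without it.
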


\begin{proof}
It follows from \cite[p. 44]{Gradshteyn2000} that
\begin{equation}\label{csc2xsum}
\csc^2(\pi x)=\frac{1}{\pi^2x^2}+\frac{2}{\pi^2}\sum_{k=1}^{\infty}\frac{x^2+k^2}{(x^2-k^2)^2}.
\end{equation}
Replacement of $x$ by $t/\pi$ in \eqref{csc2xsum} yields
\begin{equation}\label{csc2tsum}
\left(\frac{t}{\sin t}\right)^2=1+\sum_{k=1}^{\infty}\frac{2t^4+2\pi^{2}k^{2}t^2}{(t^2-\pi^{2}k^2)^2}.
\end{equation}
From \eqref{csc2tsum} and \eqref{cot-expansion}, we obtain
\begin{align}\label{Wilker-inequality-Keyresult}
\left(\frac{t}{\sin t}\right)^2+\frac{t}{\tan t}=2+4t^4\sum_{k=1}^{\infty}\frac{1}{(\pi^2k^2-t^2)^2}=2+4t^4\sum_{k=1}^{\infty}\frac{1}{\pi^{4}k^{4}\big(1-(\frac{t}{\pi k})^2\big)^2}.
\end{align}
Using the following identity:
\begin{equation}\label{identity1/(1-q)2}
\frac{1}{(1-q)^2}=\sum_{j=1}^{N-1}jq^{j-1}+\frac{Nq^{N-1}}{1-q}+\frac{q^N}{(1-q)^2}\qquad
(q\not=1)
\end{equation}
and \eqref{even-series-Expansion}, we then have
\begin{align*}
&\left(\frac{t}{\sin t}\right)^2+\frac{t}{\tan t}=2+4t^4\sum_{k=1}^{\infty}\frac{1}{\pi^{4}k^{4}\big(1-(\frac{t}{\pi k})^2\big)^2}\\
&\quad=2+4t^4\sum_{k=1}^{\infty}\frac{1}{\pi^{4}k^{4}}\left(\sum_{j=1}^{N-1}j\left(\frac{t}{\pi k}\right)^{2j-2}+\frac{N(\frac{t}{\pi k})^{2N-2}}{1-(\frac{t}{\pi k})^2}+\frac{(\frac{t}{\pi k})^{2N}}{(1-(\frac{t}{\pi k})^2)^2}\right)\\
&\quad=2+\sum_{j=1}^{N-1}\frac{j\cdot2^{2j+3}|B_{2j+2}|}{(2j+2)!}t^{2j+2}+\sum_{k=1}^{\infty}\frac{4Nt^{2N+2}}{\pi^{2N}k^{2N}(\pi^{2}k^2-t^2)}+\sum_{k=1}^{\infty}\frac{4t^{2N+4}}{\pi^{2N}k^{2N}(\pi^{2}k^2-t^2)^2}\\
&\quad=2+\sum_{j=1}^{N-1}\frac{j\cdot2^{2j+3}|B_{2j+2}|}{(2j+2)!}t^{2j+2}+\frac{4t^{2N+2}}{\pi^{2N}}V_{N}(t),
\end{align*}
where
\begin{align*}
V_{N}(t)=\sum_{k=1}^{\infty}\frac{N\pi^2k^2-(N-1)t^2}{k^{2N}(\pi^{2}k^2-t^2)^2}.
\end{align*}

Differentiation yields
\begin{align*}
V'_{N}(t)=\sum_{k=1}^{\infty}\frac{2t\Big((N+1)\pi^2k^2-(N-1)t^2\Big)}{k^{2N}(\pi^{2}k^2-t^2)^3}>0.
\end{align*}
Hence, $V_{N}(t)$ is strictly increasing for $t\in (0, \pi/2)$, and we have
\begin{equation*}\label{Sharp-Wilkerlambda-mu}
\lambda_{N}t^{2N+2}<\left(\frac{t}{\sin t}\right)^2+\frac{t}{\tan t}-\left(2+\sum_{j=1}^{N-1}\frac{j\cdot2^{2j+3}|B_{2j+2}|}{(2j+2)!}t^{2j+2}\right)<\mu_{N}t^{2N+2}
\end{equation*}
with
\begin{align*}
\lambda_{N}=\frac{4}{\pi^{2N}}V_{N}(0)\quad\text{and}\quad \mu_{N}&=\frac{4}{\pi^{2N}}V_{N}\left(\frac{\pi}{2}\right).
\end{align*}
Direct computations yield
\begin{align*}
V_{N}(0)=\frac{N}{\pi^2}\sum_{k=1}^{\infty}\frac{1}{k^{2N+2}}=\frac{N\cdot2^{2N+1}\pi^{2N}|B_{2N+2}|}{(2N+2)!}
\end{align*}
and
\begin{align*}
V_{N}\left(\frac{\pi}{2}\right)&=\frac{16N}{\pi^2}\sum_{k=1}^{\infty}\frac{1}{k^{2N-2}(4k^2-1)^2}-\frac{4(N-1)}{\pi^2}\sum_{k=1}^{\infty}\frac{1}{k^{2N}(4k^2-1)^2}.
\end{align*}
Hence, the inequality \eqref{Sharp-Wilkerlambda-mu} holds with the best possible constants given in \eqref{Wilker-inequality-lambda-Best-lambda} and \eqref{Wilker-inequality-lambda-Best-mu}.
The proof of Theorem \ref{Wilker-inequality-lambda-mu} is complete.
\end{proof}

\begin{remark}
Direct computations yield
\begin{equation*}
\lambda_{1}=\frac{2}{45},\quad \mu_{1}=\frac{4(\pi^2-8)}{\pi^4}
\end{equation*}
and
\begin{equation*}
\lambda_2=\frac{8}{945},\quad \mu_2=\frac{8(-720+90\pi^2-\pi^4)}{45\pi^6}.
\end{equation*}
We then obtain from \eqref{Sharp-Wilkerlambda-mu} that for $0<t<\pi/2$,
\begin{equation}\label{Sharp-Wilkerlambda-mu-N=1}
2+\frac{2}{45}t^{4}<\left(\frac{t}{\sin t}\right)^2+\frac{t}{\tan t}<2+\frac{4(\pi^2-8)}{\pi^4}t^{4},
\end{equation}
where the constants $\frac{2}{45}$ and $4(\pi^2-8)/\pi^4$
are the best possible, and
\begin{equation}\label{Sharp-Wilkerlambda-mu-N=2}
2+\frac{2}{45}t^{4}+\frac{8}{945}t^6<\left(\frac{t}{\sin t}\right)^2+\frac{t}{\tan t}<2+\frac{2}{45}t^{4}+\frac{8(-720+90\pi^2-\pi^4)}{45\pi^6}t^{6},
\end{equation}
where the constants $\frac{8}{945}$ and $8(-720+90\pi^2-\pi^4)/(45\pi^6)$
are the best possible.
\end{remark}

The classical Euler gamma function may be defined (for $x>0$) by
\begin{equation}\label{egamma}
\Gamma(x)=\int^\infty_0t^{x-1} e^{-t}{\rm d} t.
\end{equation}
The logarithmic derivative of $\Gamma(x)$, denoted by $$\psi(x)=\frac{\Gamma'(x)}{\Gamma(x)},$$ is called the psi
(or digamma) function, and $\psi^{(k)}(x)\;\; (k\in \mathbb{N})$ are called the polygamma functions.

\begin{theorem}\label{Sharp-Wilker-type-inequality2}
Let $N\geq0$ be an integer. Then for $0<x<\pi/2$,
\begin{align}\label{Best-Wilker-type-inequality-x}
\alpha_{N}x^{4}<\left(\frac{x}{\sin x}\right)^2+\frac{x}{\tan x}-\left(2+4x^4\sum_{k=1}^{N}\frac{1}{(\pi^2k^2-x^2)^2}\right)<\beta_{N}x^{4}
\end{align}
with the best possible constants
\begin{align}\label{Huygens-inequality-best-constants-alphaN}
\alpha_N=\frac{2\psi'''(N+1)}{3\pi^4}\quad \text{and}\quad
\beta_N&=\frac{8\Big((2N+1)^2\psi'(N+\frac{1}{2})-4(N+1)\Big)}{(2N+1)^2\pi^4}.
\end{align}
\end{theorem}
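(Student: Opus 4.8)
The plan is to recognize the bracketed difference as the tail of the series in the identity \eqref{Wilker-inequality-Keyresult} and then to control that tail by monotonicity. Writing $x$ for $t$ in \eqref{Wilker-inequality-Keyresult} and subtracting off the first $N$ terms of the series gives
\[
\left(\frac{x}{\sin x}\right)^2+\frac{x}{\tan x}-\left(2+4x^4\sum_{k=1}^{N}\frac{1}{(\pi^2k^2-x^2)^2}\right)=4x^4W(x),
\]
where $W(x):=\sum_{k=N+1}^{\infty}(\pi^2k^2-x^2)^{-2}$. Dividing by $x^4>0$, the claimed inequality \eqref{Best-Wilker-type-inequality-x} is equivalent to $\alpha_N<4W(x)<\beta_N$ on $(0,\pi/2)$, so it suffices to determine the infimum and supremum of $4W(x)$ there.

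Each summand $(\pi^2k^2-x^2)^{-2}$ with $k\geq N+1\geq1$ is positive and strictly increasing on $(0,\pi/2)$, since $\pi^2k^2-x^2\geq\tfrac34\pi^2>0$ and this denominator decreases as $x^2$ grows; equivalently $W'(x)=\sum_{k=N+1}^{\infty}4x(\pi^2k^2-x^2)^{-3}>0$ for $x>0$. Hence $W$ is continuous and strictly increasing on $[0,\pi/2]$, so that $W(0)<W(x)<W(\pi/2)$ for $0<x<\pi/2$. This establishes \eqref{Best-Wilker-type-inequality-x} with $\alpha_N=4W(0)$ and $\beta_N=4W(\pi/2)$; moreover, because $4W(x)\to4W(0)$ as $x\to0^+$ and $4W(x)\to4W(\pi/2)$ as $x\to(\pi/2)^-$, these constants are best possible.

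It then remains to evaluate the two endpoint values explicitly. For the lower constant, $W(0)=\pi^{-4}\sum_{k=N+1}^{\infty}k^{-4}$, and the standard representation $\psi'''(z)=6\sum_{m=0}^{\infty}(z+m)^{-4}$ gives $\sum_{k=N+1}^{\infty}k^{-4}=\tfrac16\psi'''(N+1)$, so that $\alpha_N=4W(0)=\tfrac{2}{3\pi^4}\psi'''(N+1)$, as in \eqref{Huygens-inequality-best-constants-alphaN}. For the upper constant I would use $\pi^2k^2-(\pi/2)^2=\tfrac{\pi^2}{4}(2k-1)(2k+1)$, so that $W(\pi/2)=\tfrac{16}{\pi^4}\sum_{k=N+1}^{\infty}[(2k-1)(2k+1)]^{-2}$.

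The main obstacle is the closed-form evaluation of this last sum. The route is the partial-fraction identity
\[
\frac{1}{(2k-1)^2(2k+1)^2}=\frac14\left(\frac{1}{(2k-1)^2}+\frac{1}{(2k+1)^2}\right)+\frac14\left(\frac{1}{2k+1}-\frac{1}{2k-1}\right),
\]
whose second group telescopes to $-\tfrac{1}{4(2N+1)}$, while the squared terms are tails of the trigamma function: from $\psi'(z)=\sum_{m=0}^{\infty}(z+m)^{-2}$ one gets $\sum_{k=N+1}^{\infty}(2k-1)^{-2}=\tfrac14\psi'(N+\tfrac12)$ and $\sum_{k=N+1}^{\infty}(2k+1)^{-2}=\tfrac14\psi'(N+\tfrac12)-(2N+1)^{-2}$. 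Assembling these yields $\sum_{k=N+1}^{\infty}[(2k-1)(2k+1)]^{-2}=\tfrac18\psi'(N+\tfrac12)-\tfrac{N+1}{2(2N+1)^2}$, and multiplying by $16/\pi^4$ and then by $4$ reduces $\beta_N=4W(\pi/2)$ to the stated form in \eqref{Huygens-inequality-best-constants-alphaN}. The delicacy here is purely in bookkeeping the two index-shifted trigamma tails and in verifying the telescoping limit, not in any conceptual difficulty.
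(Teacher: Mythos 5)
Your proposal is correct, and its main body coincides with the paper's proof: the same decomposition of \eqref{Wilker-inequality-Keyresult} into the partial sum plus the tail $A_N(x)=\sum_{k=N+1}^{\infty}(\pi^2k^2-x^2)^{-2}$ (your $W(x)$), the same monotonicity argument giving $4A_N(0)$ and $4A_N(\pi/2)$ as the sharp constants, and the same evaluation of $A_N(0)$ through $\psi'''(N+1)$. The one place you genuinely diverge is the evaluation of $A_N(\pi/2)$: the paper simply cites the identity
\begin{equation*}
\sum_{k=N+1}^{\infty}\frac{1}{(4k^2-1)^2}=\frac{1}{8}\psi'\Bigl(N+\frac{1}{2}\Bigr)-\frac{N+1}{2(2N+1)^2}
\end{equation*}
and verifies it by induction on $N$ in an appendix, whereas you derive it directly from the partial-fraction decomposition
$\frac{1}{(2k-1)^2(2k+1)^2}=\frac14\bigl(\frac{1}{(2k-1)^2}+\frac{1}{(2k+1)^2}\bigr)+\frac14\bigl(\frac{1}{2k+1}-\frac{1}{2k-1}\bigr)$,
telescoping the second group and expressing the squared tails as shifted trigamma values; I checked the bookkeeping (the telescoped term $-\frac{1}{4(2N+1)}$, the two tails $\frac14\psi'(N+\frac12)$ and $\frac14\psi'(N+\frac12)-(2N+1)^{-2}$, and the final assembly) and it reproduces the paper's constant $\beta_N$ exactly. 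Your route has the advantage of being constructive — it produces the closed form rather than merely verifying a formula one must already know — while the paper's induction is shorter once the formula is guessed; either way the theorem is fully proved.
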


\begin{proof}
Write \eqref{Wilker-inequality-Keyresult} as
\begin{align*}
\left(\frac{x}{\sin x}\right)^2+\frac{x}{\tan x}=2+4x^4\sum_{k=1}^{N}\frac{1}{(\pi^2k^2-x^2)^2}+4x^4A_N(x),
\end{align*}
where
\begin{align*}
A_N(x)=\sum_{k=N+1}^{\infty}\frac{1}{(\pi^2k^2-x^2)^2}.
\end{align*}
Obviously, $A_N(x)$ is strictly increasing for $x\in(0, \pi/2)$. Hence, for $0<x<\pi/2$, we have
\begin{align*}
\alpha_Nx^4<\left(\frac{x}{\sin x}\right)^2+\frac{x}{\tan x}-\left(2+4x^4\sum_{k=1}^{N}\frac{1}{(\pi^2k^2-x^2)^2}\right)<\beta_{N}x^4
\end{align*}
with
\begin{align*}
\alpha_N=4A_N(0)=\frac{4}{\pi^4}\sum_{k=N+1}^{\infty}\frac{1}{k^4}  \quad\text{and}\quad    \beta_N=4A_N\left(\frac{\pi}{2}\right)=\frac{64}{\pi^4}\sum_{k=N+1}^{\infty}\frac{1}{\big(4k^2-1\big)^2}.
\end{align*}

From the following formula (see \cite[p.~260, Eq. (6.4.10)]{abram}):
\begin{align*}
\psi^{(n)}(z)=(-1)^{n+1}n!\sum_{k=0}^{\infty}\frac{1}{(z+k)^{n+1}},\qquad  z\not= 0,-1,-2,\ldots,
\end{align*}
we obtain
\begin{align}\label{Sharp-Wilker-type-inequality-find1}
\sum_{k=N+1}^{\infty}\frac{1}{k^4}=\frac{\psi'''(N+1)}{6}.
\end{align}
We find\footnote{The formula \eqref{Sharp-Wilker-type-inequality-find2} is established by induction on $N$ in the appendix.}
\begin{align}\label{Sharp-Wilker-type-inequality-find2}
\sum_{k=N+1}^{\infty}\frac{1}{\big(4k^2-1\big)^2}=\frac{1}{8}\psi'\left(N+\frac{1}{2}\right)-\frac{N+1}{2(2N+1)^2}.
\end{align}
Hence, the inequality \eqref{Best-Wilker-type-inequality-x} holds with the best possible constants given in \eqref{Huygens-inequality-best-constants-alphaN}.
The proof of Theorem \ref{Sharp-Wilker-type-inequality2} is complete.
\end{proof}

\begin{remark}\label{Sharp-Wilker-type-inequality-xRemark1}
The choice $N=0$ in \eqref{Best-Wilker-type-inequality-x} yields \eqref{Sharp-Wilkerlambda-mu-N=1}. The choice $N=1$ in \eqref{Best-Wilker-type-inequality-x} yields
\begin{align}\label{Best-Wilker-type-inequality-N=1}
2+\frac{4x^4}{(\pi^2-x^2)^2}+\frac{2(\pi^4-90)}{45\pi^4}x^{4}<\left(\frac{x}{\sin x}\right)^2+\frac{x}{\tan x}<2+\frac{4x^4}{(\pi^2-x^2)^2}+\frac{4(9\pi^2-88)}{9\pi^4}x^{4}
\end{align}
for $0<x<\pi/2$, where the constants $2(\pi^4-90)/(45\pi^4)$ and $4(9\pi^2-88)/(9\pi^4)$
are the best possible.
\end{remark}
\begin{remark}
There is no strict comparison between  the two lower bounds in \eqref{Sharp-Wilkerlambda-mu-N=2} and \eqref{Best-Wilker-type-inequality-N=1}. Likewise, there is no strict comparison between  the two upper bounds in \eqref{Sharp-Wilkerlambda-mu-N=2} and \eqref{Best-Wilker-type-inequality-N=1}.
\end{remark}

 Theorem \ref{Walker-type-Inequality-Proof1} proves Conjecture 2 in \cite{Chen-CheungJIA}.
\begin{theorem}\label{Walker-type-Inequality-Proof1}
Let $N\geq1$ be an integer. Then for $0<x<\pi/2$, we have
\begin{align}\label{expansion-Wilker-Conjecture}
&2+\sum_{k=1}^{N-1}\frac{k\cdot2^{2k+3}|B_{2k+2}|}{(2k+2)!}x^{2k+2}+p_{N}x^{2N+1}\tan x<\left(\frac{x}{\sin x}\right)^2+\frac{x}{\tan x}\nonumber\\
&\qquad\qquad\qquad\qquad\qquad<2+\sum_{k=1}^{N-1}\frac{k\cdot2^{2k+3}|B_{2k+2}|}{(2k+2)!}x^{2k+2}+q_{N}x^{2N+1}\tan x
\end{align}
with the best possible constants
\begin{align}\label{expansion-Wilker-ConjectureConstants}
p_N=0\quad\text{and}\quad q_N=\frac{N\cdot2^{2N+3}|B_{2N+2}|}{(2N+2)!}.
\end{align}
\end{theorem}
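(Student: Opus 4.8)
The plan is to read off the two relevant Taylor series from \eqref{expansion-Wilker} and from Theorem \ref{Thm1-remainder-tan}, put their coefficients in closed form using \eqref{even-series-Expansion}, and reduce both halves of \eqref{expansion-Wilker-Conjecture} to a single monotonicity statement about a sequence of coefficient ratios. First I would record the series. Writing $W(x)=(x/\sin x)^2+x/\tan x$, the quantity in the middle of \eqref{expansion-Wilker-Conjecture} is
$$D_N(x):=W(x)-\Big(2+\sum_{k=1}^{N-1}a_kx^{2k+2}\Big)=\sum_{k=N}^{\infty}a_kx^{2k+2},\qquad a_k:=\frac{k\,2^{2k+3}|B_{2k+2}|}{(2k+2)!}.$$
Using \eqref{even-series-Expansion} in the form $\zeta(2n)=2^{2n-1}\pi^{2n}|B_{2n}|/(2n)!$, where $\zeta(s)=\sum_{k\ge1}k^{-s}$, gives the clean expression $a_k=4k\,\zeta(2k+2)/\pi^{2k+2}>0$. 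Likewise Theorem \ref{Thm1-remainder-tan} gives $\tan x=\sum_{j=1}^{\infty}c_jx^{2j-1}$ with $c_j=2^{2j}(2^{2j}-1)|B_{2j}|/(2j)!=2(2^{2j}-1)\zeta(2j)/\pi^{2j}$, and in particular $c_1=1$.

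Since every $a_k>0$ and $0<x<\pi/2<\pi$, we have $D_N(x)>0$, which is the lower inequality with $p_N=0$; that $p_N=0$ cannot be enlarged follows because $D_N(x)\to D_N(\pi/2)<\infty$ while $x^{2N+1}\tan x\to\infty$ as $x\to(\pi/2)^-$, so the ratio $D_N(x)/(x^{2N+1}\tan x)\to0$. For the upper bound I would compare power series. With $q_N=a_N$, reindexing $x^{2N+1}\tan x=\sum_{k\ge N}c_{k-N+1}x^{2k+2}$ gives
$$q_Nx^{2N+1}\tan x-D_N(x)=\sum_{m=0}^{\infty}c_{m+1}\big(\rho_0-\rho_m\big)x^{2N+2m+2},\qquad \rho_m:=\frac{a_{N+m}}{c_{m+1}},$$
where $\rho_0=a_N/c_1=q_N$. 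Hence the upper inequality follows once we show $\rho_m<\rho_0$ for all $m\ge1$, since then every coefficient on the right is positive and the sum is positive for $0<x<\pi/2$; the best-constant claim $q_N=a_N$ then follows from $D_N(x)/(x^{2N+1}\tan x)\to\rho_0=q_N$ as $x\to0^+$. It therefore suffices to prove that the sequence $\{\rho_m\}_{m\ge0}$ is strictly decreasing.

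The heart of the matter — and the step I expect to be the main obstacle — is the monotonicity $\rho_{m+1}<\rho_m$. Using the closed forms above, a direct computation (with the powers of $\pi$ cancelling) collapses the ratio to
$$\frac{\rho_{m+1}}{\rho_m}=\frac{N+m+1}{N+m}\cdot\frac{2^{2m+2}-1}{2^{2m+4}-1}\cdot\frac{\zeta(2N+2m+4)}{\zeta(2N+2m+2)}\cdot\frac{\zeta(2m+2)}{\zeta(2m+4)}.$$
I would then bound the four factors separately: $\frac{N+m+1}{N+m}\le2$; next $\frac{2^{2m+2}-1}{2^{2m+4}-1}<\frac14$; then $\zeta(2N+2m+4)/\zeta(2N+2m+2)<1$ because $\zeta$ is decreasing on $(1,\infty)$; and finally $\zeta(2m+2)/\zeta(2m+4)\le\zeta(2)/1=\pi^2/6$, using that $\zeta$ is decreasing together with $\zeta>1$ on $(1,\infty)$. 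Multiplying the four bounds gives $\rho_{m+1}/\rho_m<2\cdot\frac14\cdot\frac{\pi^2}{6}=\frac{\pi^2}{12}<1$, which is exactly the required strict decrease.

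The only external input beyond the excerpt is the Euler formula for $\zeta(2n)$ — equivalently \eqref{even-series-Expansion} — together with the elementary monotonicity and positivity of $\zeta$. The remaining work is routine bookkeeping: assembling the bounds in the last display, checking that the two power series may be subtracted term by term for $0<x<\pi/2$ (both converge since $\pi/2<\pi$), and confirming the two endpoint limits that certify $p_N=0$ and $q_N=a_N$ are best possible. I do not anticipate difficulty there; the entire weight of the argument rests on casting $a_k$ and $c_j$ in the $\zeta$-form so that the coefficient-ratio sequence becomes explicitly estimable.
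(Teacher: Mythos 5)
Your proof is correct, and it shares the paper's skeleton: the lower bound and $p_N=0$ come from the positivity of the coefficients in \eqref{expansion-Wilker}, the upper bound is reduced to coefficient-wise positivity of $q_Nx^{2N+1}\tan x-D_N(x)$ after expanding $\tan x$ by \eqref{tanx-expansion} (with the leading coefficient cancelling exactly), and optimality of both constants follows from the same two endpoint limits as $x\to0^+$ and $x\to(\pi/2)^-$. The genuine difference lies in how the key coefficient inequality is proved. The paper establishes \eqref{Thm-Huygens-type-inequality2Keyclaim} for each $k\ge N+2$ directly, by sandwiching $|B_{2n}|/(2n)!$ with the two-sided bounds \eqref{eq:1.11} and reducing, through monotone-sequence estimates, to the elementary inequality $(2N-1)2^{2N+3}>3N$. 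You instead rewrite all coefficients exactly in terms of zeta values via Euler's formula (equivalently \eqref{even-series-Expansion}) and prove the stronger assertion that the ratio sequence $\rho_m=a_{N+m}/c_{m+1}$ is strictly decreasing, using the factored bound $\rho_{m+1}/\rho_m<2\cdot\frac{1}{4}\cdot1\cdot\frac{\pi^2}{6}=\frac{\pi^2}{12}<1$; each of your four factor estimates is valid ($N+m\ge1$ gives the factor $\le 2$; $4(2^{2m+2}-1)<2^{2m+4}-1$ gives the factor $<\frac14$; monotonicity of $\zeta$ on $(1,\infty)$ together with $\zeta>1$ handles the two zeta ratios). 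Your route buys exact cancellation of the powers of $\pi$ and a transparent numerical margin $\pi^2/12<1$, whereas the paper's route never needs $\zeta$ explicitly and works only with the cruder Bernoulli bounds; the two arguments are of comparable length and depth, and both yield strict positivity of every coefficient with index $m\ge1$, which is exactly what the best-constant limits require.
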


\begin{proof}
By \eqref{expansion-Wilker}, for $p_N=0$, the first inequality in \eqref{expansion-Wilker-Conjecture} holds. We now prove the second inequality in \eqref{expansion-Wilker-Conjecture} with $q_N=N\cdot2^{2N+3}|B_{2N+2}|/(2N+2)!$.
Using \eqref{expansion-Wilker} and the following expansion (see \cite[p. 42]{Gradshteyn2000}):
\begin{align}\label{tanx-expansion}
\tan x=\sum_{k=1}^{\infty}\frac{2^{2k}(2^{2k}-1)|B_{2k}|}{(2k)!}x^{2k-1},\qquad |x|<\frac{\pi}{2},
\end{align}
we  find
\begin{align}\label{Thm-Wilker-type-inequality2Key}
&\frac{N\cdot2^{2N+3}|B_{2N+2}|}{(2N+2)!}x^{2N+1}\tan x-\left(\left(\frac{x}{\sin x}\right)^2+\frac{x}{\tan x}-2-\sum_{k=1}^{N-1}\frac{k\cdot2^{2k+3}|B_{2k+2}|}{(2k+2)!}x^{2k+2}\right)\nonumber\\
&=\frac{N\cdot2^{2N+3}|B_{2N+2}|}{(2N+2)!}x^{2N+1}\sum_{k=1}^{\infty}\frac{2^{2k}(2^{2k}-1)|B_{2k}|}{(2k)!}x^{2k-1}-\sum_{k=N+1}^{\infty}\frac{(k-1)\cdot2^{2k+1}|B_{2k}|}{(2k)!}x^{2k}\nonumber\\
&=\sum_{k=N+2}^{\infty}\left\{\frac{N\cdot2^{2N+3}|B_{2N+2}|}{(2N+2)!}\frac{2^{2k-2N}(2^{2k-2N}-1)|B_{2k-2N}|}{(2k-2N)!}-\frac{(k-1)\cdot2^{2k+1}|B_{2k}|}{(2k)!}\right\}x^{2k},
\end{align}
where we note that the term corresponding to $k=N+1$ vanishes.

We claim that  for $k\geq N+2$,
\begin{align}\label{Thm-Huygens-type-inequality2Keyclaim}
\frac{N\cdot2^{2N+3}|B_{2N+2}|}{(2N+2)!}\frac{2^{2k-2N}(2^{2k-2N}-1)|B_{2k-2N}|}{(2k-2N)!}>\frac{(k-1)\cdot2^{2k+1}|B_{2k}|}{(2k)!}.
\end{align}
Using the inequality (see \cite[p. 805]{abram})
 \begin{eqnarray}\label{eq:1.11}
 \frac{2}{\left(2\pi\right)^{2n}\left(1-2^{1-2n}\right)}>\frac{\left|B_{2n}\right|}{(2n)!}>\frac{2}{\left(2\pi\right)^{2n}},\qquad
n\geq1,
\end{eqnarray}
 it is sufficient to prove that for $k\geq N+2$,
 \begin{align*}
\frac{N\cdot2^{2N+3}\cdot2}{\left(2\pi\right)^{2N+2}}\frac{2^{2k-2N}(2^{2k-2N}-1)\cdot2}{\left(2\pi\right)^{2k-2N}}> \frac{2(k-1)\cdot2^{2k+1}}{\left(2\pi\right)^{2k}\left(1-2^{1-2k}\right)},
\end{align*}
which can be rearranged as
\begin{align*}
N\left(\frac{2^{2k}}{2^{2N}}-1\right)>\frac{\pi^2}{2}(k-1)\left(1+\frac{2}{2^{2k}-2}\right),\qquad k\geq N+2.
\end{align*}
Noting that $\pi^2/2<5$, it is enough to prove the following inequality:
\begin{align*}
N\left(\frac{2^{2k}}{2^{2N}}-1\right)>5(k-1)\left(1+\frac{2}{2^{2k}-2}\right),\qquad k\geq N+2,
\end{align*}
which can be rearranged as
\begin{align*}
\frac{N}{2^{2N}}2^{2k}-5(k-1)>N+\frac{10(k-1)}{2^{2k}-2},\qquad k\geq N+2.
\end{align*}
Noting that the sequence
\begin{align*}
\frac{N}{2^{2N}}2^{2k}-5(k-1)
\end{align*}
is strictly increasing for $k\geq N+2$, and  the sequence
\begin{align*}
\frac{10(k-1)}{2^{2k}-2}
\end{align*}
is strictly decreasing for $k\geq 2$, it is enough to prove the following inequality:
\begin{align*}
\frac{N}{2^{2N}}2^{2(N+2)}-5(N+1)>N+\frac{10(N+1)}{2^{2(N+2)}-2},
\end{align*}
which can be rearranged as
\begin{align*}
(2N-1)2^{2N+3}>3N,\qquad N\geq1.
\end{align*}
Obviously, the last inequality holds.  This proves the claim \eqref{Thm-Huygens-type-inequality2Keyclaim}.
From \eqref{Thm-Wilker-type-inequality2Key}, we obtain the second inequality in \eqref{expansion-Wilker-Conjecture} with $q_N=N\cdot2^{2N+3}|B_{2N+2}|/(2N+2)!$.

Write \eqref{expansion-Wilker-Conjecture} as
\begin{align*}
p_N<\frac{\left(\frac{x}{\sin x}\right)^2+\frac{x}{\tan x}-2-\sum_{k=1}^{N-1}\frac{k\cdot2^{2k+3}|B_{2k+2}|}{(2k+2)!}x^{2k+2}}{x^{2N+1}\tan x}<q_N.
\end{align*}
We find that
\begin{align*}
\lim_{x\to\frac{\pi}{2}}\frac{\left(\frac{x}{\sin x}\right)^2+\frac{x}{\tan x}-2-\sum_{k=1}^{N-1}\frac{k\cdot2^{2k+3}|B_{2k+2}|}{(2k+2)!}x^{2k+2}}{x^{2N+1}\tan x}=0
\end{align*}
and
\begin{align*}
\lim_{x\to0}\frac{\left(\frac{x}{\sin x}\right)^2+\frac{x}{\tan x}-2-\sum_{k=1}^{N-1}\frac{k\cdot2^{2k+3}|B_{2k+2}|}{(2k+2)!}x^{2k+2}}{x^{2N+1}\tan x}=\frac{N\cdot2^{2N+3}|B_{2N+2}|}{(2N+2)!}.
\end{align*}
Hence, the inequality \eqref{expansion-Wilker-Conjecture} holds with the best possible constants given in \eqref{expansion-Wilker-ConjectureConstants}. The proof of Theorem \ref{Walker-type-Inequality-Proof1} is complete.
\end{proof}

Using \eqref{expansion-cotx} and the following expansion  (see \cite[p. 43]{Gradshteyn2000}):
\begin{align*}
\csc x=\frac{1}{x}+\sum_{k=1}^{\infty}\frac{2(2^{2k-1}-1)|B_{2k}|}{(2k)!}x^{2k-1},\qquad |x|<\pi,
\end{align*}
we  find
\begin{align}\label{Thm-Huygens-type-inequality2Key}
2\left(\frac{x}{\sin x}\right)+\frac{x}{\tan x}=3+\sum_{k=2}^{\infty}\frac{(2^{2k}-4)|B_{2k}|}{(2k)!}x^{2k},\qquad |x|<\pi.
\end{align}
It follows from \eqref{Thm-Huygens-type-inequality2Key} that for every $N\in\mathbb{N}$,
\begin{align}\label{expansion-Huygensholds}
\frac{(2^{2N+2}-4)|B_{2N+2}|}{(2N+2)!}x^{2N+2}<2\left(\frac{x}{\sin x}\right)+\frac{x}{\tan x}-\left(3+\sum_{k=2}^{N}\frac{(2^{2k}-4)|B_{2k}|}{(2k)!}x^{2k}\right)
\end{align}
for $0<|x|<\pi$.

In view of \eqref{expansion-Huygensholds} it is natural to ask: What is the largest number $a_{N}$ and what is the smallest number $b_{N}$ such that the inequality
\begin{align*}
a_{N}x^{2N+2}<2\left(\frac{x}{\sin x}\right)+\frac{x}{\tan x}-\left(3+\sum_{k=2}^{N}\frac{(2^{2k}-4)|B_{2k}|}{(2k)!}x^{2k}\right)<b_{N}x^{2N+2}
\end{align*}
holds for  $x\in(0, \pi/2)$ and $N\in\mathbb{N}$? Theorem \ref{Sharp-Huygens-type-inequality-x} answers this question.

\begin{theorem}\label{Sharp-Huygens-type-inequality-x}
Let $N\geq1$ be an integer. Then for $0<|x|<\pi/2$,
\begin{align}\label{Best-Huygens-type-inequality-x}
a_{N}x^{2N+2}<2\left(\frac{x}{\sin x}\right)+\frac{x}{\tan x}-\left(3+\sum_{j=2}^N\frac{(2^{2j}-4){|B_{2j}|}}{(2j)!}x^{2j}\right)<b_{N}x^{2N+2}
\end{align}
with the best possible constants
\begin{align}\label{Huygens-inequality-best-constants-aN}
a_N=\frac{(2^{2N+2}-4)|B_{2N+2}|}{(2N+2)!}
\end{align}
and
\begin{align}\label{Huygens-inequality-best-constants-bN}
b_N&=\frac{8}{\pi^{2N+2}}\left(\sum_{k=1}^{\infty}\frac{(-1)^{k+1}}{k^{2N}(2k-1)}-\sum_{k=1}^{\infty}\frac{(-1)^{k+1}}{k^{2N}(2k+1)}\right)\nonumber\\
&\quad-\frac{4}{\pi^{2N+2}}\left(\sum_{k=1}^{\infty}\frac{1}{k^{2N}(2k-1)}-\sum_{k=1}^{\infty}\frac{1}{k^{2N}(2k+1)}\right).
\end{align}
\end{theorem}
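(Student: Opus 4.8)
The plan is to represent the remainder in two complementary ways: a \emph{pole expansion}, obtained by combining the cotangent and cosecant remainder formulas of Theorems~\ref{Thm2-remainder-cot} and~\ref{Thm2-remainder-1/sin}, which will let me evaluate the upper constant $b_N$ in closed form, together with the \emph{power series} \eqref{Thm-Huygens-type-inequality2Key}, which will supply monotonicity. Since every quantity involved is even in $x$, it suffices to argue for $0<x<\pi/2$.

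First I would multiply the expansions of Theorems~\ref{Thm2-remainder-cot} and~\ref{Thm2-remainder-1/sin} by $t$ and form $2\,t\csc t+t\cot t$. Because $2(2^{2j}-2)-2^{2j}=2^{2j}-4$ (which vanishes at $j=1$), the polynomial parts combine exactly into $3+\sum_{j=2}^{N}\frac{(2^{2j}-4)|B_{2j}|}{(2j)!}t^{2j}$, while the two remainders combine into
\begin{equation*}
\rho_N(t):=2t\,r_N(t)+t\,\theta_N(t)=\frac{2t^{2N+2}}{\pi^{2N}}\sum_{k=1}^{\infty}\frac{2(-1)^{k+1}-1}{k^{2N}\big(\pi^2k^2-t^2\big)}.
\end{equation*}
This identifies the middle member of \eqref{Best-Huygens-type-inequality-x} with $\rho_N(x)$, so it suffices to set $W_N(t):=\rho_N(t)/t^{2N+2}$ and prove $a_N<W_N(t)<b_N$ on $(0,\pi/2)$, with $a_N,b_N$ attained only in the limits $t\to0^+$ and $t\to(\pi/2)^-$.

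The second step is monotonicity. By \eqref{Thm-Huygens-type-inequality2Key} one has $\rho_N(t)=\sum_{k=N+1}^{\infty}c_k t^{2k}$ with $c_k=\frac{(2^{2k}-4)|B_{2k}|}{(2k)!}$, and since $k\ge N+1\ge2$ forces $2^{2k}-4>0$, every $c_k$ is strictly positive. Hence $W_N(t)=\sum_{m=0}^{\infty}c_{N+1+m}t^{2m}$ is a power series in $t^2$ with positive coefficients and radius of convergence $\pi$, so it is continuous on $[0,\pi/2]$ and strictly increasing there. Consequently $W_N(0)<W_N(t)<W_N(\pi/2)$ for $0<t<\pi/2$, the inequalities are strict, and the two endpoint values are the best possible constants. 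Reading off $W_N(0)=c_{N+1}$ gives $a_N=\frac{(2^{2N+2}-4)|B_{2N+2}|}{(2N+2)!}$, which is \eqref{Huygens-inequality-best-constants-aN}.

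Finally I would evaluate $W_N(\pi/2)$ from the pole expansion. Substituting $t=\pi/2$ gives $\pi^2k^2-\pi^2/4=\frac{\pi^2}{4}(4k^2-1)$, whence
\begin{equation*}
W_N\!\left(\tfrac{\pi}{2}\right)=\frac{8}{\pi^{2N+2}}\sum_{k=1}^{\infty}\frac{2(-1)^{k+1}-1}{k^{2N}(4k^2-1)}.
\end{equation*}
Splitting $2(-1)^{k+1}-1$ and applying the partial fraction $\frac{1}{4k^2-1}=\frac12\big(\frac{1}{2k-1}-\frac{1}{2k+1}\big)$ reproduces exactly the four-sum expression \eqref{Huygens-inequality-best-constants-bN} for $b_N$. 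Here I expect the main obstacle to be the monotonicity claim: the pole representation of $W_N$ carries the \emph{mixed-sign} weights $2(-1)^{k+1}-1$ (equal to $+1$ for odd $k$ and $-3$ for even $k$), so a term-by-term differentiation argument of the kind used for the secant in Theorem~\ref{Thm-remainder-sec} does not immediately yield a definite sign. The clean route is therefore to use the pole form only for the endpoint evaluation and to establish monotonicity instead from the manifest positivity of the Maclaurin coefficients $c_k$; one should also note that the interchange and the endpoint value at $t=\pi/2<\pi$ are legitimate since the series has radius of convergence $\pi$.
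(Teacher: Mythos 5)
Your proof is correct, and its first and last steps coincide with the paper's: the paper likewise combines Theorems \ref{Thm2-remainder-cot} and \ref{Thm2-remainder-1/sin} into the pole expansion of the remainder (written there as $\frac{2x^{2N+2}}{\pi^{2N}}U_N(x)$ with $U_N(x)=\sum_{k\geq1}(-1)^{k+1}\frac{2-(-1)^{k+1}}{k^{2N}((k\pi)^2-x^2)}$, whose numerator $(-1)^{k+1}(2-(-1)^{k+1})=2(-1)^{k+1}-1$ is exactly your weight), and it evaluates $U_N(\pi/2)$ by the same partial-fraction split to obtain $b_N$. Where you genuinely diverge is the monotonicity step, which is the heart of the matter. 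The paper differentiates the pole expansion term by term, writes $U_N'(x)/(2x)=\sum_{k\geq1}(-1)^{k+1}\alpha_k$ with $\alpha_k=\frac{2-(-1)^{k+1}}{k^{2N}((k\pi)^2-x^2)^2}$, and verifies $\alpha_k>\alpha_{k+1}$, so the alternating-series criterion gives $U_N'(x)>0$; thus the mixed-sign weights you flagged as an obstacle are in fact handled directly by a Leibniz-type argument. You instead obtain monotonicity from \eqref{Thm-Huygens-type-inequality2Key}: the remainder is a power series in $t^2$ with strictly positive coefficients $c_k=(2^{2k}-4)|B_{2k}|/(2k)!$ for $k\geq2$ and radius of convergence $\pi$, so $W_N(t)=\rho_N(t)/t^{2N+2}$ is continuous on $[0,\pi/2]$ and strictly increasing there. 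Your route is cleaner: it needs no differentiation and no comparison of consecutive terms, and it yields $a_N$ instantly as the leading Maclaurin coefficient $c_{N+1}$, whereas the paper recovers $U_N(0)$ through the zeta-type sums \eqref{even-series-Expansion} and \eqref{even-series-ExpansionA}. What the paper's argument buys in exchange is self-containedness: both endpoint constants come out of the single function $U_N$ in the pole representation, without invoking the Maclaurin expansion \eqref{Thm-Huygens-type-inequality2Key} at all.
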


\begin{proof}
By Theorems \ref{Thm2-remainder-1/sin} and \ref{Thm2-remainder-cot}, we have
\begin{align}\label{remainder-1/sin}
2\left(\frac{x}{\sin x}\right)=2+\sum_{j=1}^N\frac{(2^{2j+1}-4){|B_{2j}|}}{(2j)!}x^{2j}+x^{2N+2}\sum_{k=1}^{\infty}\frac{4(-1)^{k+1}}{(k\pi)^{2N}\big((k\pi)^2-x^2\big)}
\end{align}
and
\begin{align}\label{remainder-tan}
\frac{x}{\tan x}=1-\sum_{j=1}^N\frac{2^{2j}{|B_{2j}|}}{(2j)!}x^{2j}-x^{2N+2}\sum_{k=1}^{\infty}\frac{2}{(k\pi)^{2N}\big((k\pi)^2-x^2\big)}.
\end{align}
Adding these two expressions, we obtain
\begin{align}\label{Huygens-inequality-generalization}
2\left(\frac{x}{\sin x}\right)+\frac{x}{\tan x}=3+\sum_{j=2}^N\frac{(2^{2j}-4){|B_{2j}|}}{(2j)!}x^{2j}+\frac{2x^{2N+2}}{\pi^{2N}}U_{N}(x),
\end{align}
where
\begin{align*}
U_{N}(x)=\sum_{k=1}^{\infty}(-1)^{k+1}\frac{2-(-1)^{k+1}}{k^{2N}\big((k\pi)^2-x^2\big)}.
\end{align*}
Differentiation yields
\begin{align}\label{Huygens-inequality-diff-Ux}
\frac{U'_{N}(x)}{2x}=\sum_{k=1}^{\infty}(-1)^{k+1}\alpha_k,\quad \alpha_k=\frac{2-(-1)^{k+1}}{k^{2N}\big((k\pi)^2-x^2\big)^2}.
\end{align}
Then it is easily seen that $\alpha_{k}>\alpha_{k+1}$ for $k\in\mathbb{N}$, $0<x<\pi/2$ and $N\in\mathbb{N}$; thus for every $N\geq1$, we have $U'_N(x)>0$ for $0<x<\pi/2$.
Hence, for all $0<x<\pi/2$ and $N\in\mathbb{N}$, we have
\begin{align*}
U_{N}(0)<U_{N}(x)<U_{N}\left(\frac{\pi}{2}\right).
\end{align*}

Using \eqref{even-series-Expansion} and \eqref{even-series-ExpansionA}, we find
\begin{align*}
a_N&=\frac{2U_{N}(0)}{\pi^{2N}}=4\sum_{k=1}^{\infty}\frac{(-1)^{k+1}}{(k\pi)^{2N+2}}-2\sum_{k=1}^{\infty}\frac{1}{(k\pi)^{2N+2}}\\
&=\frac{4(2^{2N+1}-1)}{(2N+2)!}|B_{2N+2}|-\frac{2\cdot2^{2N+1}}{(2N+2)!}|B_{2N+2}|=\frac{(2^{2N+2}-4)|B_{2N+2}|}{(2N+2)!}
\end{align*}
and
\begin{align*}
b_N&=\frac{2U_{N}(\pi/2)}{\pi^{2N}}=4\sum_{k=1}^{\infty}\frac{(-1)^{k+1}}{(k\pi)^{2N}\big((k\pi)^2-(\pi/2)^2\big)}-2\sum_{k=1}^{\infty}\frac{1}{(k\pi)^{2N}\big((k\pi)^2-(\pi/2)^2\big)}\\
&=\frac{8}{\pi^{2N+2}}\left(\sum_{k=1}^{\infty}\frac{(-1)^{k+1}}{k^{2N}(2k-1)}-\sum_{k=1}^{\infty}\frac{(-1)^{k+1}}{k^{2N}(2k+1)}\right)\\
&\quad-\frac{4}{\pi^{2N+2}}\left(\sum_{k=1}^{\infty}\frac{1}{k^{2N}(2k-1)}-\sum_{k=1}^{\infty}\frac{1}{k^{2N}(2k+1)}\right).
\end{align*}
The proof of Theorem \ref{Sharp-Huygens-type-inequality-x} is complete.
\end{proof}

Clearly,
\begin{align*}
a_1=\frac{1}{60}\quad \text{and}\quad a_2=\frac{1}{504}.
\end{align*}
Direct computations yield
\begin{align*}
\sum_{k=1}^{\infty}\frac{(-1)^{k+1}}{k^{2}(2k-1)}=\pi-2\ln2-\frac{\pi^2}{12},\quad \sum_{k=1}^{\infty}\frac{(-1)^{k+1}}{k^{2}(2k+1)}=4-2\ln2-\pi+\frac{\pi^2}{12},
\end{align*}
\begin{align*}
\sum_{k=1}^{\infty}\frac{1}{k^{2}(2k-1)}=-\frac{\pi^2}{6}+4\ln2,\quad \sum_{k=1}^{\infty}\frac{1}{k^{2}(2k+1)}=-4+4\ln2+\frac{\pi^2}{6},
\end{align*}
\begin{align*}
\sum_{k=1}^{\infty}\frac{(-1)^{k+1}}{k^{4}(2k-1)}=4\pi-8\ln2-\frac{\pi^2}{3}-\frac{3}{2}\zeta(3)-\frac{7\pi^4}{720},
\end{align*}
\begin{align*}
 \sum_{k=1}^{\infty}\frac{(-1)^{k+1}}{k^{4}(2k+1)}=16-4\pi-8\ln2+\frac{\pi^2}{3}-\frac{3}{2}\zeta(3)+\frac{7\pi^4}{720},
\end{align*}
\begin{align*}
\sum_{k=1}^{\infty}\frac{1}{k^{4}(2k-1)}=16\ln2-\frac{2\pi^2}{3}-2\zeta(3)-\frac{\pi^4}{90},
\end{align*}
\begin{align*}
\sum_{k=1}^{\infty}\frac{1}{k^{4}(2k+1)}=-16+16\ln2+\frac{2\pi^2}{3}-2\zeta(3)+\frac{\pi^4}{90},
\end{align*}
where $\zeta(s)$ is the Riemann zeta function.
Then, we obtain from \eqref{Huygens-inequality-best-constants-bN}
\begin{align*}
b_1=\frac{16(\pi-3)}{\pi^4}  \quad\text{and}\quad b_2=\frac{960\pi-\pi^4-2880}{15\pi^6}.
\end{align*}

From \eqref{Best-Huygens-type-inequality-x}, we have, for $0<|x|<\pi/2$,
\begin{align}\label{Best-Huygens-type-inequality-xN=1}
3+\frac{1}{60}x^{4}<2\left(\frac{x}{\sin x}\right)+\frac{x}{\tan x}<3+\frac{16(\pi-3)}{\pi^4}x^{4},
\end{align}
where the constants $\frac{1}{60}$ and $16(\pi-3)/\pi^4$
are the best possible, and
\begin{align}\label{Best-Huygens-type-inequality-xN=2}
3+\frac{1}{60}x^4+\frac{1}{504}x^{6}<2\left(\frac{x}{\sin x}\right)+\frac{x}{\tan x}<3+\frac{1}{60}x^4+\frac{960\pi-\pi^4-2880}{15\pi^6}x^{6},
\end{align}
where the constants $\frac{1}{504}$ and $(960\pi-\pi^4-2880)/(15\pi^6)$
are the best possible.

The formula  \eqref{Thm-Huygens-type-inequality2Key} motivated us to observe Theorem \ref{Huygens-type-inequality-proof2}.
\begin{theorem}\label{Huygens-type-inequality-proof2}
Let $N\geq1$ be an integer. Then for $0<x<\pi/2$, we have
\begin{align}\label{Huygens-inequality-generalization}
&3+\sum_{j=2}^{N}\frac{(2^{2j}-4){|B_{2j}|}}{(2j)!}x^{2j}+\rho_{N}x^{2N+1}\tan x<2\left(\frac{x}{\sin x}\right)+\frac{x}{\tan x}\nonumber\\
&\qquad\qquad\qquad\qquad\qquad\qquad<3+\sum_{j=2}^{N}\frac{(2^{2j}-4){|B_{2j}|}}{(2j)!}x^{2j}+\varrho_{N}x^{2N+1}\tan x
\end{align}
with the best possible constants
\begin{align}\label{Huygens-inequality-generalizationConstants}
\rho_N=0\quad\text{and}\quad \varrho_N=\frac{4(2^{2N}-1){|B_{2N+2}|}}{(2N+2)!}.
\end{align}
\end{theorem}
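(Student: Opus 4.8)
The plan is to follow the template of Theorem~\ref{Walker-type-Inequality-Proof1} almost verbatim, with the Wilker sum replaced by the Huygens expansion \eqref{Thm-Huygens-type-inequality2Key}. The lower bound is the easy half. Since every coefficient $(2^{2k}-4)|B_{2k}|/(2k)!$ with $k\geq N+1$ is positive (indeed $2^{2k}\geq 16>4$ and $|B_{2k}|>0$), the difference $2(x/\sin x)+x/\tan x-3-\sum_{j=2}^N(2^{2j}-4)|B_{2j}|x^{2j}/(2j)!=\sum_{k=N+1}^{\infty}(2^{2k}-4)|B_{2k}|x^{2k}/(2k)!$ is strictly positive on $(0,\pi/2)$, exactly as recorded in \eqref{expansion-Huygensholds}; this yields the left inequality with $\rho_N=0$.

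For the upper bound I would form $\varrho_N x^{2N+1}\tan x$ minus the tail above, expand $\tan x$ by \eqref{tanx-expansion}, and reindex the product series by $k=N+j$ so that both series are written in powers $x^{2k}$. The coefficient of $x^{2k}$ becomes $\varrho_N\,2^{2(k-N)}(2^{2(k-N)}-1)|B_{2(k-N)}|/(2(k-N))!-(2^{2k}-4)|B_{2k}|/(2k)!$. I expect the $k=N+1$ term to cancel identically: the $\tan$-series supplies the factor $2^2(2^2-1)|B_2|/2!=1$, while $2^{2N+2}-4=4(2^{2N}-1)$ forces $\varrho_N=(2^{2N+2}-4)|B_{2N+2}|/(2N+2)!=a_N$, so the two pieces coincide. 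It then remains to prove that every coefficient with $k\geq N+2$ is positive, i.e. $\varrho_N\,2^{2(k-N)}(2^{2(k-N)}-1)|B_{2(k-N)}|/(2(k-N))!>(2^{2k}-4)|B_{2k}|/(2k)!$.

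This term-by-term claim is the step carrying all the arithmetic weight, and it is where I expect the only real difficulty. I would insert the two-sided bounds \eqref{eq:1.11} for $|B_{2n}|/(2n)!$, using the lower bound on the two left-hand factors and the upper bound on the right, after which the transcendental content cancels against powers of $2\pi$ and the inequality collapses to an elementary one. Writing $A=2^{2N}$ and $B=2^{2k}$ (so that $B\geq 16A$ because $k\geq N+2$), the reduction is to $2(A-1)(B-A)/(\pi^2 A^2)\geq (B-4)/(B-2)$. The main thing to monitor is the factor $(1-2^{1-2k})$ produced by the upper Bernoulli bound; it is disposed of cleanly by $(B-4)/(B-2)<1$, so it suffices to show $2(A-1)(B-A)\geq \pi^2 A^2$. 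Using $B-A\geq 15A$ and $\pi^2<10$ this reduces to $(30-\pi^2)A\geq 30$, which holds trivially since $A\geq 4$.

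Finally, for the optimality of the constants I would recast the claimed inequality as two-sided bounds on the single ratio $R(x)=[\,2(x/\sin x)+x/\tan x-3-\sum_{j=2}^N(2^{2j}-4)|B_{2j}|x^{2j}/(2j)!\,]/(x^{2N+1}\tan x)$ and compute its boundary limits. As $x\to0^+$ the numerator is asymptotic to $a_Nx^{2N+2}$ and the denominator to $x^{2N+2}$, so $R(x)\to a_N=\varrho_N$, forcing $\varrho_N$ to be the best upper constant. As $x\to(\pi/2)^-$ the numerator stays finite and positive while $\tan x\to+\infty$, so $R(x)\to 0$, forcing $\rho_N=0$ to be the best lower constant. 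This matches \eqref{Huygens-inequality-generalizationConstants} and completes the plan.
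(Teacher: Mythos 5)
Your proposal is correct and follows essentially the same route as the paper's proof: the lower bound from positivity of the tail of \eqref{Thm-Huygens-type-inequality2Key}, the upper bound by expanding $\tan x$, noting the cancellation of the $k=N+1$ term, and verifying the term-by-term coefficient inequality via the two-sided Bernoulli bounds \eqref{eq:1.11}, and finally the boundary limits at $0$ and $\pi/2$ for optimality of the constants. The only difference is cosmetic: your elementary endgame (reducing to $2(A-1)(B-A)\geq\pi^2A^2$ with $A=2^{2N}$, $B=2^{2k}$, $B\geq 16A$) is a slightly cleaner rearrangement of the same arithmetic the paper carries out with $\pi^2/2<5$.
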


\begin{proof}
By \eqref{Thm-Huygens-type-inequality2Key}, for $\rho_N=0$, the first inequality in \eqref{Huygens-inequality-generalization} holds. We now prove the second inequality in \eqref{Huygens-inequality-generalization} with $\varrho_N=4(2^{2N}-1){|B_{2N+2}|}/(2N+2)!$.
Using \eqref{tan-expansion} and \eqref{Thm-Huygens-type-inequality2Key}, we  find
\begin{align}\label{Huygens-inequality-generalizationFind}
&\frac{4(2^{2N}-1){|B_{2N+2}|}}{(2N+2)!}x^{2N+1}\tan x-\left(2\left(\frac{x}{\sin x}\right)+\frac{x}{\tan x}-3-\sum_{j=2}^{N}\frac{(2^{2j}-4){|B_{2j}|}}{(2j)!}x^{2j}\right)\nonumber\\
&\quad=\frac{4(2^{2N}-1){|B_{2N+2}|}}{(2N+2)!}x^{2N+1}\sum_{k=1}^{\infty}\frac{2^{2k}(2^{2k}-1)|B_{2k}|}{(2k)!}x^{2k-1}-\sum_{k=N+1}^{\infty}\frac{(2^{2k}-4)|B_{2k}|}{(2k)!}x^{2k}\nonumber\\
&\quad=\sum_{k=N+2}^{\infty}\left\{\frac{4(2^{2N}-1){|B_{2N+2}|}}{(2N+2)!}\frac{2^{2k-2N}(2^{2k-2N}-1)|B_{2k-2N}|}{(2k-2N)!}-\frac{(2^{2k}-4)|B_{2k}|}{(2k)!}\right\}x^{2k}.
\end{align}

We claim that  for $k\geq N+2$,
\begin{align}\label{Thm-Huygens-type-inequality2Claim}
\frac{4(2^{2N}-1){|B_{2N+2}|}}{(2N+2)!}\frac{2^{2k-2N}(2^{2k-2N}-1)|B_{2k-2N}|}{(2k-2N)!}>\frac{(2^{2k}-4)|B_{2k}|}{(2k)!}.
\end{align}
Using the inequality \eqref{eq:1.11}, it is sufficient to prove that
\begin{align*}
\frac{4(2^{2N}-1)\cdot2}{\left(2\pi\right)^{2N+2}}\frac{2^{2k-2N}(2^{2k-2N}-1)\cdot2}{\left(2\pi\right)^{2k-2N}}> \frac{(2^{2k}-4)\cdot2}{\left(2\pi\right)^{2k}\left(1-2^{1-2k}\right)},\qquad k\geq N+2,
\end{align*}
which can be rearranged as
\begin{align*}
\left(1-\frac{1}{2^{2N}}\right)\left(\frac{2^{2k}}{2^{2N}}-1\right)>\frac{\pi^2}{2}\left(1-\frac{2}{2^{2k}-2}\right),\qquad k\geq N+2.
\end{align*}
Noting that $\pi^2/2<5$, it is enough to prove the following inequality:
\begin{align*}
\left(1-\frac{1}{2^{2N}}\right)\left(\frac{2^{2k}}{2^{2N}}-1\right)>5\left(1-\frac{2}{2^{2k}-2}\right),\qquad k\geq N+2,
\end{align*}
which can be written as
\begin{align*}
\left(1-\frac{1}{2^{2N}}\right)\frac{2^{2k}}{2^{2N}}+\frac{1}{2^{2N}}+\frac{10}{2^{2k}-2}>6,\qquad k\geq N+2.
\end{align*}
It is enough to prove the following inequality:
\begin{align}\label{Thm-Huygens-type-inequality2Claimenough}
\left(1-\frac{1}{2^{2N}}\right)\frac{2^{2k}}{2^{2N}}+\frac{1}{2^{2N}}>6,\qquad k\geq N+2.
\end{align}
Clearly,
\begin{align*}
\left(1-\frac{1}{2^{2N}}\right)\frac{2^{2k}}{2^{2N}}+\frac{1}{2^{2N}}\geq\left(1-\frac{1}{2^{2N}}\right)\frac{2^{2N+4}}{2^{2N}}+\frac{1}{2^{2N}}=16-\frac{15}{2^{2N}},\qquad k\geq N+2.
\end{align*}
In order to prove \eqref{Thm-Huygens-type-inequality2Claimenough}, it
suffices to show that
\begin{align*}
16-\frac{15}{2^{2N}}>6,\qquad N\geq1,
\end{align*}
that is,
\begin{align*}
2^{2N+1}>3,\qquad N\geq1.
\end{align*}
Obviously, the last inequality holds.   This proves the claim \eqref{Thm-Huygens-type-inequality2Claim}.
From \eqref{Huygens-inequality-generalizationFind}, we obtain the second inequality in \eqref{Huygens-inequality-generalization} with $\varrho_N=4(2^{2N}-1){|B_{2N+2}|}/(2N+2)!$.

Write \eqref{Huygens-inequality-generalization} as
\begin{align*}
\rho_N<\frac{2\left(\frac{x}{\sin x}\right)+\frac{x}{\tan x}-3-\sum_{j=2}^{N}\frac{(2^{2j}-4){|B_{2j}|}}{(2j)!}x^{2j}}{x^{2N+1}\tan x}<\varrho_N.
\end{align*}
We find
\begin{align*}
\lim_{x\to\frac{\pi}{2}}\frac{2\left(\frac{x}{\sin x}\right)+\frac{x}{\tan x}-3-\sum_{j=2}^{N}\frac{(2^{2j}-4){|B_{2j}|}}{(2j)!}x^{2j}}{x^{2N+1}\tan x}=0
\end{align*}
and
\begin{align*}
\lim_{x\to0}\frac{2\left(\frac{x}{\sin x}\right)+\frac{x}{\tan x}-3-\sum_{j=2}^{N}\frac{(2^{2j}-4){|B_{2j}|}}{(2j)!}x^{2j}}{x^{2N+1}\tan x}=\frac{4(2^{2N}-1){|B_{2N+2}|}}{(2N+2)!}.
\end{align*}
Hence, the inequality \eqref{Huygens-inequality-generalization} holds with the best possible constants given in \eqref{Huygens-inequality-generalizationConstants}. The proof of Theorem \ref{Huygens-type-inequality-proof2} is complete.
\end{proof}
\begin{remark}
 For $0<|x|<\pi/2$, we have
\begin{align}\label{Thm-Huygens-type-inequality1}
3+ax^3\tan x<2\left(\frac{x}{\sin x}\right)+\frac{x}{\tan x}<3+bx^3\tan x
\end{align}
with the best possible constants
\begin{align}\label{Thm-Huygens-type-inequality1best}
a=0\quad \text{and}\quad b=\frac{1}{60}.
\end{align}
There is no strict comparison between  the two upper bounds in \eqref{Best-Huygens-type-inequality-xN=1} and \eqref{Thm-Huygens-type-inequality1}.
\end{remark}

\vskip 8mm

\section{The Papenfuss-Bach inequality}

Papenfuss \cite{Papenfuss765} proposed the following  problem:

Prove that
\begin{equation}\label{sec-tan}
x\sec^{2} x-\tan x\leq\frac{8\pi^{2}x^3}{\big(\pi^2-4x^2\big)^2},\qquad 0\leq x<\pi/2.
\end{equation}
Bach \cite{Bach62} proved the  inequality \eqref{sec-tan} and obtained a further result as follows:
\begin{equation}\label{Bachsec-tan}
x\sec^{2} x-\tan x\leq\frac{(2\pi^4/3)x^3}{\big(\pi^2-4x^2\big)^2},\qquad 0\leq x<\pi/2.
\end{equation}
Ge \cite[Theorem 1.3]{Ge2012} presented a lower bound in \eqref{Bachsec-tan} and proved that
\begin{equation}\label{Gesec-tan}
\frac{64x^3}{\big(\pi^2-4x^2\big)^2}<x\sec^{2} x-\tan x\leq\frac{(2\pi^4/3)x^3}{\big(\pi^2-4x^2\big)^2},\qquad 0\leq x<\pi/2,
\end{equation}
where the constants $64$ and $2\pi^4/3$ are the best possible.

Sun and Zhu  \cite[Theorem 1.5]{Sun-Zhu869261} obtained better bounds for the Papenfuss-Bach inequality:
\begin{equation}\label{SunZhusec-tan}
\frac{\frac{2\pi^4}{3}x^3+\left(\frac{8\pi^4}{15}-\frac{16\pi^2}{3}\right)x^5}{\big(\pi^2-4x^2\big)^2}<x\sec^{2} x-\tan x<\frac{\frac{2\pi^4}{3}x^3+\left(\frac{256}{\pi^2}\cdot(\frac{513}{511})-\frac{8\pi^2}{3}\right)x^5}{\big(\pi^2-4x^2\big)^2}
\end{equation}
for $0< x<\pi/2$.

Also in \cite{Sun-Zhu869261}, Sun and Zhu posed the following
\begin{open}\label{Open-sec-tan}
Let $0 <x<\pi/2$. Then
\begin{equation}\label{SunZhusec-tanOpen}
\frac{\frac{2\pi^4}{3}x^3+\left(\frac{8\pi^4}{15}-\frac{16\pi^2}{3}\right)x^5}{\big(\pi^2-4x^2\big)^2}<x\sec^{2} x-\tan x<\frac{\frac{2\pi^4}{3}x^3+\left(\frac{256}{\pi^2}-\frac{8\pi^2}{3}\right)x^5}{\big(\pi^2-4x^2\big)^2}
\end{equation}
hold, where $\frac{8\pi^4}{15}-\frac{16\pi^2}{3}$ and $\frac{256}{\pi^2}-\frac{8\pi^2}{3}$ are the best constants in \eqref{SunZhusec-tanOpen}.
\end{open}
In this section, we present a series
representation of the remainder in the expansion  for $t\sec^{2}t-\tan t$. Based on this representation, we establish new bounds for $x\sec^{2} x-\tan x$. We also answer the open problem \ref{Open-sec-tan}.

It follows from \cite[p. 44]{Gradshteyn2000} that
\begin{equation*}
\sec^{2}\frac{\pi x}{2}=\frac{4}{\pi^2}\sum_{k=1}^{\infty}\left\{\frac{1}{(2k-1-x)^{2}}+\frac{1}{(2k-1+x)^{2}}\right\}.
\end{equation*}
Replacement of $x$ by $2t/\pi$ yields
\begin{equation}\label{sec-series}
\sec^{2}t=\frac{4}{\pi^2}\sum_{k=1}^{\infty}\left\{\frac{1}{(2k-1-\frac{2t}{\pi})^{2}}+\frac{1}{(2k-1+\frac{2t}{\pi})^{2}}\right\}.
\end{equation}
From \eqref{tan-expansion-re} and \eqref{sec-series}, we have
\begin{equation}\label{sec-tan-expansion}
t\sec^{2}t-\tan t=\frac{64t^3}{\pi^4}\sum_{k=1}^{\infty}\frac{1}{(2k-1)^{4}\left(1-\big(\frac{2t}{\pi(2k-1)}\big)^2\right)^2}.
\end{equation}
Using \eqref{identity1/(1-q)2}
and \eqref{odd-series-Expansion},
we obtain from \eqref{sec-tan-expansion} the series
representation of the remainder in the expansion  for $\sec^{2}t-\tan t/t$:
\begin{align}\label{sec-tan+RN}
&t\sec^{2}t-\tan t\nonumber\\
&=\frac{64t^3}{\pi^4}\sum_{k=1}^{\infty}\frac{1}{(2k-1)^{4}}\left(\sum_{j=1}^{N-1}j\bigg(\frac{2t}{\pi(2k-1)}\bigg)^{2j-2}+\frac{N\big(\frac{2t}{\pi(2k-1)}\big)^{2N-2}}{1-\big(\frac{2t}{\pi(2k-1)}\big)^2}+\frac{\big(\frac{2t}{\pi(2k-1)}\big)^{2N}}{\Big(1-\big(\frac{2t}{\pi(2k-1)}\big)^2\Big)^2}\right)\nonumber\\
&=\sum_{j=1}^{N-1}\frac{2j\cdot2^{2j+2}(2^{2j+2}-1)|B_{2j+2}|}{(2j+2)!}t^{2j+1}+\kappa_N(t),
\end{align}
where
\begin{align}\label{sec-tan-RN}
\kappa_N(t)&=\frac{N\cdot2^{2N+4}t^{2N+1}}{\pi^{2N}}\sum_{k=1}^{\infty}\frac{1}{(2k-1)^{2N}\big(\pi^2(2k-1)^2-4t^2\big)}\nonumber\\
&\quad+\frac{2^{2N+6}t^{2N+3}}{\pi^{2N}}\sum_{k=1}^{\infty}\frac{1}{(2k-1)^{2N}\big(\pi^2(2k-1)^2-4t^2\big)^2}.
\end{align}

\begin{theorem}\label{Thm-remainder-sectan}
Let $N\geq1$ be an integer. Then for $0<t<\pi/2$, we have
\begin{align}\label{sec-tan+RNreobtain}
L_N(t)&<t\sec^{2}t-\tan t-\sum_{j=1}^{N-1}\frac{2j\cdot2^{2j+2}(2^{2j+2}-1)|B_{2j+2}|}{(2j+2)!}t^{2j+1}\nonumber\\
&\qquad\qquad\qquad\quad-\frac{N\cdot2^{2N+4}t^{2N+1}}{\pi^{2N}(\pi^2-4t^2)}-\frac{2^{2N+6}t^{2N+3}}{\pi^{2N}(\pi^2-4t^2)^2}<M_N(t),
\end{align}
where
\begin{align*}
L_N(t)&=\frac{N\cdot2^{2N+4}t^{2N+1}}{\pi^{2N+2}}\left\{\frac{(2^{2N+2}-1)\pi^{2N+2}|B_{2N+2}|}{2\cdot(2N+2)!}-1\right\}\\
&\quad+\frac{2^{2N+6}t^{2N+3}}{\pi^{2N+4}}\left\{\frac{(2^{2N+4}-1)\pi^{2N+4}|B_{2N+4}|}{2\cdot(2N+4)!}-1\right\}
\end{align*}
and
\begin{align*}
M_N(t)=\frac{N\cdot2^{2N+2}t^{2N+1}}{\pi^{2N+2}}\sum_{k=2}^{\infty}\frac{1}{(2k-1)^{2N}k(k-1)}+\frac{2^{2N+2}t^{2N+3}}{\pi^{2N+4}}\sum_{k=2}^{\infty}\frac{1}{(2k-1)^{2N}k^2(k-1)^2}.
\end{align*}
\end{theorem}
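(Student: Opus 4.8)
The plan is to read the statement directly off the series representation \eqref{sec-tan+RN}--\eqref{sec-tan-RN} that has already been established. Subtracting from $t\sec^{2}t-\tan t$ the polynomial $\sum_{j=1}^{N-1}\frac{2j\cdot2^{2j+2}(2^{2j+2}-1)|B_{2j+2}|}{(2j+2)!}t^{2j+1}$ leaves exactly the remainder $\kappa_N(t)$ of \eqref{sec-tan-RN}. The two further rational terms subtracted in the middle member of \eqref{sec-tan+RNreobtain} are precisely the $k=1$ contributions to the two series defining $\kappa_N(t)$: indeed, at $k=1$ one has $(2k-1)^{2N}=1$ and $\pi^2(2k-1)^2-4t^2=\pi^2-4t^2$. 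Hence the central quantity in \eqref{sec-tan+RNreobtain} equals
\begin{equation*}
\frac{N\cdot2^{2N+4}t^{2N+1}}{\pi^{2N}}P(t)+\frac{2^{2N+6}t^{2N+3}}{\pi^{2N}}Q(t),
\end{equation*}
where $P(t)=\sum_{k=2}^{\infty}\frac{1}{(2k-1)^{2N}(\pi^2(2k-1)^2-4t^2)}$ and $Q(t)=\sum_{k=2}^{\infty}\frac{1}{(2k-1)^{2N}(\pi^2(2k-1)^2-4t^2)^2}$.

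The next step is a monotonicity observation. For $k\geq2$ and $0<t<\pi/2$ the denominator $\pi^2(2k-1)^2-4t^2\geq8\pi^2>0$ stays bounded away from zero and decreases as $t$ increases; thus every summand of $P$ and of $Q$, and therefore $P(t)$ and $Q(t)$ themselves, is strictly increasing on $(0,\pi/2)$. Since the prefactors $t^{2N+1}$ and $t^{2N+3}$ are positive for $t>0$, I would bound each of the two terms above separately: for the lower bound replace $P(t),Q(t)$ by $P(0),Q(0)$, and for the upper bound replace them by $P(\pi/2),Q(\pi/2)$. This yields the functions $L_N(t)$ and $M_N(t)$ respectively, provided the endpoint sums evaluate as claimed.

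Finally, the two endpoint evaluations supply the explicit constants. At $t=0$ one has $P(0)=\pi^{-2}\sum_{k=2}^{\infty}(2k-1)^{-(2N+2)}$ and $Q(0)=\pi^{-4}\sum_{k=2}^{\infty}(2k-1)^{-(2N+4)}$; applying \eqref{odd-series-Expansion} and peeling off the $k=1$ term, which equals $1$, produces exactly the expressions in braces in $L_N(t)$. At $t=\pi/2$ the key simplification is the factorization $\pi^2(2k-1)^2-\pi^2=\pi^2((2k-1)^2-1)=4\pi^2k(k-1)$, which turns $P(\pi/2)$ and $Q(\pi/2)$ into $\frac{1}{4\pi^2}\sum_{k=2}^{\infty}\frac{1}{(2k-1)^{2N}k(k-1)}$ and $\frac{1}{16\pi^4}\sum_{k=2}^{\infty}\frac{1}{(2k-1)^{2N}k^2(k-1)^2}$; after collecting the powers of $2$ and $\pi$ these reproduce the two sums in $M_N(t)$.

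The argument is essentially bookkeeping once the representation \eqref{sec-tan+RN} is in hand, so there is no serious analytic obstacle. The only points demanding care are confirming that the two subtracted rational terms match the $k=1$ contributions exactly, and checking that the $t=\pi/2$ factorization delivers $M_N(t)$ with precisely the stated powers of $2$ and $\pi$. The monotonicity of $P$ and $Q$ follows at once from the sign of the $t$-derivative of each summand, so no delicate alternating-series comparison, of the type needed for $\sec t$ in Theorem \ref{Thm-remainder-sec}, is required here.
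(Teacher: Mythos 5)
Your proposal is correct and follows essentially the same route as the paper: your $P(t)$ and $Q(t)$ are exactly the paper's $I_N(t)$ and $J_N(t)$, obtained by the same peeling-off of the $k=1$ terms from $\kappa_N(t)$, followed by the same monotonicity argument on $(0,\pi/2)$ and the same endpoint evaluations via \eqref{odd-series-Expansion} at $t=0$ and the factorization $\pi^2(2k-1)^2-\pi^2=4\pi^2k(k-1)$ at $t=\pi/2$. No substantive difference from the paper's proof.
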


\begin{proof}
Write \eqref{sec-tan+RN} as
\begin{align}\label{sec-tan+RNre}
t\sec^{2}t-\tan t&=\sum_{j=1}^{N-1}\frac{2j\cdot2^{2j+2}(2^{2j+2}-1)|B_{2j+2}|}{(2j+2)!}t^{2j+1}\nonumber\\
&\quad+\frac{N\cdot2^{2N+4}t^{2N+1}}{\pi^{2N}(\pi^2-4t^2)}+\frac{N\cdot2^{2N+4}t^{2N+1}}{\pi^{2N}}I_N(t)\nonumber\\
&\quad+\frac{2^{2N+6}t^{2N+3}}{\pi^{2N}(\pi^2-4t^2)^2}+\frac{2^{2N+6}t^{2N+3}}{\pi^{2N}}J_N(t),
\end{align}
where
\begin{align*}
I_N(t)&=\sum_{k=2}^{\infty}\frac{1}{(2k-1)^{2N}\big(\pi^2(2k-1)^2-4t^2\big)}
\end{align*}
and
\begin{align*}
J_N(t)=\sum_{k=2}^{\infty}\frac{1}{(2k-1)^{2N}\big(\pi^2(2k-1)^2-4t^2\big)^2}.
\end{align*}
Obviously, $I_{N}(t)$ and  $J_{N}(t)$  are both strictly increasing for $t\in (0, \pi/2)$. We then obtain from \eqref{sec-tan+RNre} that
\begin{align*}
&\frac{N\cdot2^{2N+4}t^{2N+1}}{\pi^{2N}}I_N(0)+\frac{2^{2N+6}t^{2N+3}}{\pi^{2N}}J_N(0)\nonumber\\
&\quad<t\sec^{2}t-\tan t-\sum_{j=1}^{N-1}\frac{2j\cdot2^{2j+2}(2^{2j+2}-1)|B_{2j+2}|}{(2j+2)!}t^{2j+1}
-\frac{N\cdot2^{2N+4}t^{2N+1}}{\pi^{2N}(\pi^2-4t^2)}-\frac{2^{2N+6}t^{2N+3}}{\pi^{2N}(\pi^2-4t^2)^2}\nonumber\\
&\quad<\frac{N\cdot2^{2N+4}t^{2N+1}}{\pi^{2N}}I_N\left(\frac{\pi}{2}\right)+\frac{2^{2N+6}t^{2N+3}}{\pi^{2N}}J_N\left(\frac{\pi}{2}\right).
\end{align*}

Direct computations yield
\begin{align*}
L_N(t)&=\frac{N\cdot2^{2N+4}t^{2N+1}}{\pi^{2N}}I_N(0)+\frac{2^{2N+6}t^{2N+3}}{\pi^{2N}}J_N(0)\nonumber\\
&=\frac{N\cdot2^{2N+4}t^{2N+1}}{\pi^{2N+2}}\sum_{k=2}^{\infty}\frac{1}{(2k-1)^{2N+2}}+\frac{2^{2N+6}t^{2N+3}}{\pi^{2N+4}}\sum_{k=2}^{\infty}\frac{1}{(2k-1)^{2N+4}}\\
&=\frac{N\cdot2^{2N+4}t^{2N+1}}{\pi^{2N+2}}\left\{\frac{(2^{2N+2}-1)\pi^{2N+2}|B_{2N+2}|}{2\cdot(2N+2)!}-1\right\}\\
&\quad+\frac{2^{2N+6}t^{2N+3}}{\pi^{2N+4}}\left\{\frac{(2^{2N+4}-1)\pi^{2N+4}|B_{2N+4}|}{2\cdot(2N+4)!}-1\right\}
\end{align*}
and
\begin{align*}
M_N(t)&=\frac{N\cdot2^{2N+4}t^{2N+1}}{\pi^{2N}}I_N\left(\frac{\pi}{2}\right)+\frac{2^{2N+6}t^{2N+3}}{\pi^{2N}}J_N\left(\frac{\pi}{2}\right)\nonumber\\
&=\frac{N\cdot2^{2N+2}t^{2N+1}}{\pi^{2N+2}}\sum_{k=2}^{\infty}\frac{1}{(2k-1)^{2N}k(k-1)}+\frac{2^{2N+2}t^{2N+3}}{\pi^{2N+4}}\sum_{k=2}^{\infty}\frac{1}{(2k-1)^{2N}k^2(k-1)^2}.
\end{align*}
 The proof of Theorem \ref{Thm-remainder-sectan} is complete.
\end{proof}

With the evaluations
\begin{align*}
\sum_{k=2}^{\infty}\frac{1}{(2k-1)^{4}k(k-1)}=9-\frac{\pi^4}{24}-\frac{\pi^2}{2}
\end{align*}
and
\begin{align*}
\sum_{k=2}^{\infty}\frac{1}{(2k-1)^{4}k^2(k-1)^2}=-59+\frac{13\pi^2}{3}+\frac{\pi^4}{6},
\end{align*}
the choice $N=2$ in \eqref{sec-tan+RNreobtain} yields
\begin{equation}\label{Chensec-tan}
\frac{P(x)}{\big(\pi^2-4x^2\big)^2}<x\sec^{2} x-\tan x<\frac{Q(x)}{\big(\pi^2-4x^2\big)^2},\quad 0<x<\frac{\pi}{2},
\end{equation}
where
\begin{align*}
P(x)&=\frac{2\pi^4}{3}x^3+\frac{8\pi^2(\pi^2-10)}{15}x^5+\frac{2(322560+1680\pi^4-672\pi^{6}+17\pi^{8})}{315\pi^4}x^7\\
&\quad+\frac{16(168-17\pi^2)}{315}x^9+\frac{32(17\pi^8-161280)}{315\pi^8}x^{11}
\end{align*}
and
\begin{align*}
Q(x)&=\frac{2\pi^4}{3}x^3+\frac{32(156-6\pi^2-\pi^4)}{3\pi^2}x^5+\frac{64(-657+37\pi^2+3\pi^4)}{3\pi^4}x^7\\
&\quad+\frac{512(285-19\pi^2-\pi^4)}{3\pi^6}x^9+\frac{512(-354+26\pi^2+\pi^4)}{3\pi^8}x^{11}.
\end{align*}
 The inequality  \eqref{Chensec-tan} is an improvement on the inequality \eqref{SunZhusec-tan}.

 \begin{remark}
 In fact,  the lower bound in  \eqref{Chensec-tan} is larger than the one in  \eqref{SunZhusec-tanOpen}, and   the upper bound in  \eqref{Chensec-tan} is smaller  than the one in  \eqref{SunZhusec-tanOpen}.    Hence, the inequality \eqref{SunZhusec-tanOpen} holds true.
If we write \eqref{SunZhusec-tanOpen} as
\begin{equation*}
\frac{8\pi^4}{15}-\frac{16\pi^2}{3}<\frac{(x\sec^{2} x-\tan x)(\pi^2-4x^2)^2-\frac{2\pi^4}{3}x^3}{x^5}<\frac{256}{\pi^2}-\frac{8\pi^2}{3},
\end{equation*}
we find that
\begin{equation*}\label{SunZhusec-tanOpenre}
\lim_{x\to0}\frac{(x\sec^{2} x-\tan x)(\pi^2-4x^2)^2-\frac{2\pi^4}{3}x^3}{x^5}=\frac{8\pi^4}{15}-\frac{16\pi^2}{3}
\end{equation*}
and
\begin{equation*}\label{SunZhusec-tanOpenre}
\lim_{x\to \pi/2}\frac{(x\sec^{2} x-\tan x)(\pi^2-4x^2)^2-\frac{2\pi^4}{3}x^3}{x^5}=\frac{256}{\pi^2}-\frac{8\pi^2}{3}.
\end{equation*}
Hence, the inequality \eqref{SunZhusec-tanOpen} holds for $0<x<\pi/2$, and the  constants $\frac{8\pi^4}{15}-\frac{16\pi^2}{3}$ and $\frac{256}{\pi^2}-\frac{8\pi^2}{3}$ are the best possible.
\end{remark}

\vskip 8mm
\section{A double inequality for the remainder in the expansion for $\sec x$}

Let $S_n(x)$  denote
\begin{align}\label{tan-expansion-remainder}
S_n(x)=\sum_{k=1}^{n}\frac{2^{2k}(2^{2k}-1)|B_{2k}|}{(2k)!}x^{2k-1},\qquad |x|<\frac{\pi}{2}.
\end{align}
 By using induction, Chen and Qi \cite{Chen-Qi} (see also \cite{Zhao499--506}) established a double inequality for the difference $\tan x-S_n(x)$:
\begin{equation}\label{mainr}
\frac{2^{2n+2}(2^{2n+2}-1){|B_{2n+2}|}}{(2n+2)!}x^{2n}\tan x
<\tan x-S_n(x)<\left(\frac2\pi\right)^{2n}x^{2n}\tan x
\end{equation}
 for $0<x<\pi/2$ and $n\in\mathbb{N}$, where the the constants
\begin{equation*}
\frac{2^{2n+2}(2^{2n+2}-1){|B_{2n+2}|}}{(2n+2)!}\quad\text{and}\quad \left(\frac2\pi\right)^{2n}
\end{equation*}
are the best possible.

It is well known \cite[p. 43]{Gradshteyn2000}  that
\begin{align}\label{sec-powerseries}
\sec x=\sum_{j=0}^{\infty}\frac{|E_{2j}|}{(2j)!}x^{2j},\qquad |x|<\frac{\pi}{2}.
\end{align}
Let $s_N(x)$  denote
\begin{align}\label{tan-expansion-remainder}
s_N(x)=\sum_{j=0}^{N-1}\frac{|E_{2j}|}{(2j)!}x^{2j},\qquad |x|<\frac{\pi}{2}.
\end{align}
In this section, we establish a double inequality for the difference $\sec x-s_N(x)$, which is an analogous result to \eqref{mainr} given by Theorem \ref{Thm-Inequality-Remainder-sec}.
\begin{theorem}\label{Thm-Inequality-Remainder-sec}
Let $N\geq0$ be an integer. Then for $0<x<\pi/2$, we have
\begin{align}\label{secx-inequality}
\frac{|E_{2N}|}{(2N)!}x^{2N-1}\tan x<\sec x-s_N(x)<\left(\frac{2}{\pi}\right)^{2N-1}x^{2N-1}\tan x,
\end{align}
where the constants $|E_{2N}|/(2N)!$ and $(2/\pi)^{2N-1}$ are the best possible.
\end{theorem}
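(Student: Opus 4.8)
The plan is to handle the two bounds by complementary methods and to read off the best constants from the two endpoints. First I would record two representations of the quantity of interest: from \eqref{sec-powerseries},
\[
\sec x-s_N(x)=\sum_{j=N}^{\infty}\frac{|E_{2j}|}{(2j)!}x^{2j},
\]
a power series with strictly positive coefficients, while Theorem \ref{Thm-remainder-sec} gives $\sec x-s_N(x)=\omega_N(x)$ with the alternating series \eqref{Thm1-remainder-sec-series-representation}. The best-constant assertions will follow once both bounds are in place, because
\[
\lim_{x\to0^+}\frac{\sec x-s_N(x)}{x^{2N-1}\tan x}=\frac{|E_{2N}|}{(2N)!},\qquad \lim_{x\to(\pi/2)^-}\frac{\sec x-s_N(x)}{x^{2N-1}\tan x}=\Big(\frac{2}{\pi}\Big)^{2N-1}.
\]
The first is the ratio of the leading ($x^{2N}$) coefficients, and the second follows from $\dfrac{\sec x}{x^{2N-1}\tan x}=\dfrac{1}{x^{2N-1}\sin x}$ together with the fact that $s_N(x)$ stays bounded while $\tan x\to\infty$ as $x\to(\pi/2)^-$.

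For the upper bound I would use Theorem \ref{Thm-remainder-sec} and the expansion \eqref{tan-expansion}. Writing $\tan x=8x\sum_{m\ge1}\big(\pi^2(2m-1)^2-4x^2\big)^{-1}$ and extracting the common factor $8(2/\pi)^{2N-1}x^{2N}$ from both $\omega_N(x)$ and $(2/\pi)^{2N-1}x^{2N-1}\tan x$, the target inequality $\omega_N(x)<(2/\pi)^{2N-1}x^{2N-1}\tan x$ reduces to
\[
\sum_{k=1}^{\infty}\frac{1}{\pi^2(2k-1)^2-4x^2}\left(1-\frac{(-1)^{k+1}}{(2k-1)^{2N-1}}\right)>0.
\]
The $k=1$ term vanishes identically, and for every $k\ge2$ (with $N\ge1$) the bracket is positive while the denominator is positive on $(0,\pi/2)$; since the $k=2$ term is already strictly positive the whole sum is, and absolute convergence justifies the termwise manipulation. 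This is the clean, termwise half of the argument.

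For the lower bound the termwise trick is unavailable, since the comparison now involves a \emph{product} of two series, so I would instead subtract power series. Using \eqref{tanx-expansion} and reindexing, the leading $x^{2N}$ contributions cancel exactly (both equal $\tfrac{|E_{2N}|}{(2N)!}x^{2N}$) and
\[
\sec x-s_N(x)-\frac{|E_{2N}|}{(2N)!}x^{2N-1}\tan x=\sum_{m=1}^{\infty}\left(\frac{|E_{2N+2m}|}{(2N+2m)!}-\frac{|E_{2N}|}{(2N)!}\frac{2^{2m+2}(2^{2m+2}-1)|B_{2m+2}|}{(2m+2)!}\right)x^{2N+2m}.
\]
It then suffices to show every coefficient is positive. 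Bounding $|E_{2N}|/(2N)!$ and $|B_{2m+2}|/(2m+2)!$ from above and $|E_{2N+2m}|/(2N+2m)!$ from below — via \eqref{eq:1.11} for the Bernoulli numbers and the analogous two-sided bound for the Euler numbers obtained from the alternating representation \eqref{odd-series-Expansion-Euler-constant} — collapses the coefficient inequality to an elementary estimate, which after simplification is dominated by $\tfrac{64}{7\pi^2}<1-3^{-5}$ and therefore holds for all $m,N\ge1$. This coefficient inequality, entirely parallel to the claims \eqref{Thm-Huygens-type-inequality2Keyclaim} and \eqref{Thm-Huygens-type-inequality2Claim} established earlier, is the main obstacle; the remaining steps are bookkeeping. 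Finally the degenerate case $N=0$, where $s_0\equiv0$ and the termwise step does not apply, is disposed of separately by the classical bounds $\tfrac{2}{\pi}<\tfrac{\sin x}{x}<1$ on $(0,\pi/2)$, which are exactly the two inequalities in question when $N=0$.
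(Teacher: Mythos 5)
Your proposal is correct and follows essentially the same route as the paper's own proof: the lower bound by comparing the power-series coefficients of $\sec x$ against those of $\frac{|E_{2N}|}{(2N)!}x^{2N-1}\tan x$ and settling the resulting coefficient inequality with two-sided bounds on $|B_{2n}|/(2n)!$ and $|E_{2n}|/(2n)!$ (your final reduction $\frac{64}{7\pi^2}<1-3^{-5}$ is equivalent to the paper's check $3^{2N+3}>60/(7\pi^2-60)$ at the worst index), the upper bound by termwise comparison of $\omega_N(x)$ with the partial-fraction series \eqref{tan-expansion}, and the best constants from the limits of the ratio as $x\to0^+$ and $x\to(\pi/2)^-$. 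The one place you go beyond the paper is the case $N=0$: the paper asserts that \eqref{Thmproofright-Inequality-Remainder-sec} ``obviously'' holds, but when $N=0$ its terms $\bigl(1-(-1)^{k+1}(2k-1)\bigr)/\bigl(\pi^2(2k-1)^2-4x^2\bigr)$ alternate in sign, so termwise positivity genuinely fails there and one must either pair consecutive terms or argue differently; your separate disposal of $N=0$ via the classical bounds $2/\pi<\sin x/x<1$ cleanly closes this small gap.
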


\begin{proof}
By \eqref{tanx-expansion} and \eqref{sec-powerseries},
the left-hand side inequality \eqref{secx-inequality} can be written for $0<x<\pi/2$ as
\begin{align*}
\sum_{j=N}^{\infty}\frac{|E_{2N}|}{(2N)!}\frac{2^{2j-2N+2}(2^{2j-2N+2}-1){|B_{2j-2N+2}|}}{(2j-2N+2)!}x^{2j}<\sum_{j=N}^{\infty}\frac{|E_{2j}|}{(2j)!}x^{2j},
\end{align*}
or
\begin{align*}
\sum_{j=N+1}^{\infty}\left\{\frac{|E_{2N}|}{(2N)!}\frac{2^{2j-2N+2}(2^{2j-2N+2}-1){|B_{2j-2N+2}|}}{(2j-2N+2)!}-\frac{|E_{2j}|}{(2j)!}\right\}x^{2j}<0.
\end{align*}

We now prove that
\begin{align}\label{Thmproofright-Inequality-Remainder-seckey}
\frac{|E_{2N}|}{(2N)!}\frac{2^{2j-2N+2}(2^{2j-2N+2}-1){|B_{2j-2N+2}|}}{(2j-2N+2)!}<\frac{|E_{2j}|}{(2j)!},\qquad j\geq N+1.
\end{align}
Using \eqref{eq:1.11} and the following inequality (see \cite[p. 805]{abram}):
 \begin{eqnarray}\label{iNEQUALITY-En}
 \frac{4^{n+1}}{\pi^{2n+1}}\left(\frac{1}{1+3^{-1-2n}}\right)<\frac{\left|E_{2n}\right|}{(2n)!}< \frac{4^{n+1}}{\pi^{2n+1}},\qquad
n=0, 1, 2,\ldots,
\end{eqnarray}
 it suffices to show that
\begin{align*}
\frac{4^{N+1}}{\pi^{2N+1}}\frac{2^{2j-2N+2}(2^{2j-2N+2}-1)2}{\left(2\pi\right)^{2j-2N+2}\left(1-2^{1-2(j-N+1)}\right)}< \frac{4^{j+1}}{\pi^{2j+1}}\left(\frac{1}{1+3^{-1-2j}}\right),\qquad j\geq N+1,
\end{align*}
which can be rearranged as
\begin{align*}
\frac{8}{\pi^{2}}\frac{4^{j-N+1}-1}{4^{j-N+1}-2}< \frac{3^{2j+1}}{3^{2j+1}+1},
\end{align*}
\begin{align*}
\frac{8}{\pi^{2}}\left(1+\frac{1}{4^{j-N+1}-2}\right)< 1-\frac{1}{3^{2j+1}+1},
\end{align*}
\begin{align*}
\frac{8}{\pi^2(4^{j-N+1}-2)}+\frac{1}{3^{2j+1}+1}< 1-\frac{8}{\pi^2}.
\end{align*}
Noting that the sequence
\begin{align*}
\frac{8}{\pi^2(4^{j-N+1}-2)}+\frac{1}{3^{2j+1}+1}
\end{align*}
is strictly decreasing for $j\geq N+1$,  it is enough to prove the following inequality:
\begin{align*}
\frac{4}{7\pi^2}+\frac{1}{3^{2N+3}+1}< 1-\frac{8}{\pi^2},
\end{align*}
which can be rearranged as
\begin{align}\label{Thmproofleft-Inequality-Remainder-sec}
3^{2N+3}>\frac{60}{7\pi^2-60}=6.60267151\ldots.
\end{align}
Obviously, \eqref{Thmproofleft-Inequality-Remainder-sec} holds for all integers $N\geq0$. This proves \eqref{Thmproofright-Inequality-Remainder-seckey}.
Hence, the left-hand side inequality \eqref{secx-inequality}  holds.

By Theorem \ref{Thm-remainder-sec} and \eqref{tan-expansion},
the right-hand side inequality \eqref{secx-inequality} can be rearranged for $0<x<\pi/2$ as
\begin{align*}
\sum_{k=1}^{\infty}\frac{(-1)^{k+1}}{(2k-1)^{2N-1}\Big(\pi^2(2k-1)^2-4x^2\Big)}<\sum_{k=1}^{\infty}\frac{1}{\pi^2(2k-1)^{2}-4x^2},
\end{align*}
or
\begin{align}\label{Thmproofright-Inequality-Remainder-sec}
\sum_{k=2}^{\infty}\left\{1-\frac{(-1)^{k+1}}{(2k-1)^{2N-1}}\right\}\frac{1}{\pi^2(2k-1)^{2}-4x^2}>0.
\end{align}
Obviously, \eqref{Thmproofright-Inequality-Remainder-sec} holds. Hence, the right-hand side inequality \eqref{secx-inequality}  holds.

Write \eqref{secx-inequality} as
\begin{align*}
\frac{|E_{2N}|}{(2N)!}<\frac{\sec x-\sum_{j=0}^{N-1}\frac{|E_{2j}|}{(2j)!}x^{2j}}{x^{2N-1}\tan x}<\left(\frac{2}{\pi}\right)^{2N-1}.
\end{align*}
We find
\begin{align*}
\lim_{x\to0}\frac{\sec x-\sum_{j=0}^{N-1}\frac{|E_{2j}|}{(2j)!}x^{2j}}{x^{2N-1}\tan x}=\frac{|E_{2N}|}{(2N)!}
\end{align*}
and
\begin{align*}
\lim_{x\to\frac{\pi}{2}}\frac{\sec x-\sum_{j=0}^{N-1}\frac{|E_{2j}|}{(2j)!}x^{2j}}{x^{2N-1}\tan x}=\left(\frac{2}{\pi}\right)^{2N-1}.
\end{align*}
Hence, the inequality \eqref{secx-inequality} holds,  the constants $|E_{2N}|/(2N)!$ and $(2/\pi)^{2N-1}$ are the best possible. The proof of Theorem \ref{Thm-Inequality-Remainder-sec} is complete.
\end{proof}

\vskip 8mm
\begin{center}
{ Appendix:  A proof of \eqref{Sharp-Wilker-type-inequality-find2}}
\end{center}

For $N=0$ in \eqref{Sharp-Wilker-type-inequality-find2}, we find  that
\begin{align*}
\sum_{k=1}^{\infty}\frac{1}{\big(4k^2-1\big)^2}=\frac{\pi^2-8}{16} \quad \text{and}\quad \frac{1}{8}\psi'\left(\frac{1}{2}\right)-\frac{1}{2}=\frac{\pi^2-8}{16}.
\end{align*}
This shows that the formula \eqref{Sharp-Wilker-type-inequality-find2} holds  for $N=0$.
\par
Now we assume  that the formula \eqref{Sharp-Wilker-type-inequality-find2} holds
for some $N \in \mathbb{N}_0:=\mathbb{N}\cup\{0\}$. Then,  for $N\mapsto N+1$ in \eqref{Sharp-Wilker-type-inequality-find2}, by using the induction hypothesis and the  following relation:
\begin{align*}
\psi'(z+1)=\psi'(z)-\frac{1}{z^2},
\end{align*}
we have
\begin{align*}
\sum_{k=N+2}^{\infty}\frac{1}{\big(4k^2-1\big)^2}&=\sum_{k=N+1}^{\infty}\frac{1}{\big(4k^2-1\big)^2}-\frac{1}{\big(4(N+1)^2-1\big)^2}\\
&=\frac{1}{8}\psi'\left(N+\frac{1}{2}\right)-\frac{N+1}{2(2N+1)^2}-\frac{1}{\big(4(N+1)^2-1\big)^2}\\
&=\frac{1}{8}\psi'\left(N+\frac{1}{2}\right)-\frac{1}{8(N+\frac{1}{2})^2}-\frac{N+2}{2\big(2N+3\big)^2}\\
&=\frac{1}{8}\psi'\left(N+\frac{3}{2}\right)-\frac{N+2}{2\big(2N+3\big)^2}.
\end{align*}
Thus, by  the principle of  mathematical induction,  the
formula  \eqref{Sharp-Wilker-type-inequality-find2} holds for all $N \in \mathbb{N}_0$.

 \vskip 4mm

\section*{Acknowledgement} Some computations in this paper
were performed using Maple software.

 \vskip 8mm

\enddocument